\theoremstyle{plain}
\newtheorem{thm}{Theorem}[section]
\newtheorem{lem}[thm]{Lemma}
\newtheorem{prop}[thm]{Proposition}
\theoremstyle{definition}
\theoremstyle{remark}
\newtheorem{rmk}[thm]{Remark}
\newcommand{\BC}{{\mathbb{C}}}
\newcommand{\BP}{{\mathbb{P}}}
\newcommand{\BQ}{{\mathbb{Q}}}
\newcommand{\BR}{{\mathbb{R}}}
\newcommand{\BZ}{{\mathbb{Z}}}
\newcommand{\CA}{{\mathcal A}}
\newcommand{\CH}{{\mathcal H}}
\DeclareFontFamily{OT1}{rsfs}{}
\DeclareFontShape{OT1}{rsfs}{n}{it}{<-> rsfs10}{}
\DeclareMathAlphabet{\curly}{OT1}{rsfs}{n}{it}
\newcommand\Spec{\operatorname{Spec}}
\newcommand{\Pic}{\mathop{\rm Pic}\nolimits}
\title{Mixed Hodge structures of cluster varieties of dimension 3}
\date{\today}
\author{Yuhang Zhang}
\address{School of Mathematical Sciences,
Key Laboratory of Intelligent Computing and Applications (Ministry of Education), 
Tongji University, Shanghai 200092, China}
\email{2230939@tongji.edu.cn}
\author{Zili Zhang}
\email{zhangzili@tongji.edu.cn}
\begin{document}

\begin{abstract}
   We calculate the mixed Hodge numbers of smooth 3-dimensional cluster varieties and show that they are of mixed Tate type. We also study the mixed Hodge structures of the cohomology and intersection cohomology groups of some singular cluster varieties. 
\end{abstract}

\maketitle
\setcounter{tocdepth}{1}
\tableofcontents
\section{Introduction}

Cluster algebras are certain $\BC$-algebras defined by $(n+m)\times n$ matrices $\widetilde{B}$, where the top $n\times n$ are anti-symmetric; see Section \ref{2} for details. The matrix $\widetilde{B}$ can also be viewed as the exchange matrix of a decorated directed graph $\Gamma$ with $n+m$ vertices and an edge $i\to j$ of weight $\widetilde{B}_{ij}$ if and only if $\widetilde{B}_{ij}>0$. The vertices $1,\cdots,n$ are called mutable vertices (marked as circles) and $n+1,\cdots,n+m$ are called frozen (marked as squares). By abuse of notation, we also say cluster varieties are defined by the graphs. Cluster algebras are not guaranteed to be finitely generated, but if they are, their affine spectra are called cluster varieties. If the graph $\Gamma$ is acyclic, the associated cluster variety is $n+m$ dimensional. In \cite{LS}, a more general finiteness condition, called the Louise property, is introduced, and it is shown that if matrix $\widetilde{B}$ is full rank and satisfies the Louise property, the associated cluster varieties are always smooth of dimension $n+m$.

For an algebraic variety $X$, the cohomology $H^*(X)$ carries a mixed Hodge structure naturally. Let $H^{k,(p,q)}(X)$ be the $(p,q)$-component of $H^k(X)$, and $h^{k,(p,q)}$ be its dimension.
We say that the mixed Hodge structure is of mixed Tate type if $h^{k,(p,q)}=0$ unless $p=q$. We say the mixed Hodge structure on $H^*(X)$ satisfies the curious hard Lefschetz property if there exists a class $\gamma\in W_4H^{2}(X)$ such that for any $i,k\ge0$ the cupping map satisfies
\[
\gamma^{i}:\textup{Gr}_{2d-2i}^WH^{k}(X)\xrightarrow{\cong} \textup{Gr}_{2d+2i}^WH^{k+2i}(X),
\]
where $d=\dim_\BC X$. We say that the mixed Hodge structure of $H^*(X)$ satisfies the numerical curious hard Lefschetz property if the identities above are equalities for dimensions only.

The mixed Hodge structure on the cohomology groups of cluster varieties are studied in \cite{LS,LS1}. They proved that even-dimensional full rank cluster varieties of Louise type satisfy the curious hard Lefschetz property, give an explicit formula for top weight piece in the mixed Hodge structure and proved various vanishing of cohomology as well as computed mixed Hodge numbers $h^{k,(p,p)}$ when $p$ is small for acyclic cluster varieties.

\subsection{Main Results.}
In this paper, we focus on the mixed Hodge structures of 3-dimensional cluster varieties without further hypothesis.  We show that 3-dimensional cluster varieties are always of Louise type (Proposition \ref{finite}), so that the Louise algorithm can always be applied. We give the explicit formula of mixed Hodge numbers of 3-dimensional cluster varieties of full rank as follows.

\begin{thm}[Proposition \ref{1m},\ref{2m}] \label{1.1}
    Let $\CA=\CA(\widetilde{B})$ be the 3-dimensional cluster variety of associated with a full rank matrix $\widetilde{B}$. Then the mixed Hodge numbers $h^{k,(p,p)}(\CA)$ are
    \begin{center}
\begin{tabular}{c|cccc}
$k-p$ & $H^0$& $H^1$& $H^2$&$H^3$ \\
\hline
$0$ & $1$ &$m$&$m$&$1$\\
$1$& $0$ &$0$&$C$& $C$ 
\end{tabular}
\end{center}
where $m=1,2,3$ is the number of frozen variables and $C$ is a combinatorial constant determined by the matrix $\widetilde{B}$. 
\end{thm}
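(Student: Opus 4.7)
The plan is to leverage the Louise-type property established in Proposition~\ref{finite} together with the severe dimensional restrictions that apply when $d = 3$. Since $\CA$ is smooth affine of complex dimension $3$, Andreotti--Frankel gives $H^k(\CA) = 0$ for $k > 3$, and the weights on $H^k(\CA)$ lie in $[k, 2k]$. The mixed Tate property is preserved under the building blocks of the Louise inductive step (tori, $\BG_m$-bundles, affine fibrations), and we may assume it throughout; this restricts the potentially nonzero $h^{k,(p,p)}$ to cells with $\lceil k/2\rceil \le p \le k$, so only the two rows $k-p\in\{0,1\}$ of the claimed table can be nonzero. The vanishings at $h^{1,(0,0)}$, $h^{2,(0,0)}$, $h^{3,(0,0)}$ and $h^{3,(1,1)}$ are handled by the standard fact $W_0 H^{>0}(\CA) = 0$ for smooth connected varieties together with the low-degree vanishings of \cite{LS1}.

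For the diagonal row ($p=k$), I would invoke the explicit description of $\mathrm{Gr}^W_{2k} H^k(\CA)$ from \cite{LS, LS1}: the top-weight piece is determined by the frozen part of $\widetilde{B}$. Specialising to $d=3$ collapses the formula to a count depending only on $m$, yielding $1, m, m, 1$ for $k = 0, 1, 2, 3$. The last entry $h^{3,(3,3)} = 1$ in particular follows from $W_6 H^3(\CA) = \BQ$ for any smooth connected affine threefold. For the sub-diagonal row I would run the Louise algorithm one step at a time. Each step produces a long exact sequence relating $H^*(\CA)$ to the cohomology of a strictly smaller cluster variety twisted by a $\BG_m$-bundle; by mixed Tateness the sequence splits on weight-graded pieces, reducing the computation of $h^{2,(1,1)}(\CA)$ and $h^{3,(2,2)}(\CA)$ to a recursion on a combinatorial invariant of $\widetilde{B}$, which we name $C$.

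The main obstacle will be to show that the two sub-diagonal entries $h^{2,(1,1)}(\CA)$ and $h^{3,(2,2)}(\CA)$ are in fact \emph{equal} and to pin down the constant $C$ in a closed combinatorial form that is manifestly independent of the chosen seed. The symmetry is the numerical analogue of a curious hard Lefschetz statement, but \cite{LS} established such a statement only in even dimensions, so in the present odd-dimensional situation I expect to derive it either by a direct weight-polynomial bookkeeping (using smoothness, the already-known diagonal entries, and the Euler characteristic to solve for the remaining two simultaneously) or by constructing a suitable Lefschetz-type class in $W_4 H^2(\CA)$ by hand from the frozen variables. Once the equality is granted, the explicit recursion coming from one Louise step pins down $C$ as a combinatorial count attached to the quiver $\Gamma$, and the independence of $C$ from the choice of seed follows because the left-hand side is intrinsic to $\CA$.
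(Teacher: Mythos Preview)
The paper's route is far more concrete than yours: it proceeds by case analysis on $m$, building explicit differential-form bases at each stage. For $m=2$ (Proposition~\ref{1m}) a change of variables (Lemma~\ref{isom}) identifies $\CA$ with a $2$-dimensional cluster variety times $\BC^*$, and K\"unneth together with Theorem~\ref{2d} gives bases for every $H^{k,(p,p)}$. For $m=1$ (Proposition~\ref{2m}) one freezes either endpoint of the mutable edge to obtain a cover $\CA=U\cup V$ by two $m=2$ varieties with $U\cap V\cong(\BC^*)^3$, and then computes each Mayer--Vietoris restriction map $f^{k,(p,p)}$ \emph{explicitly in those bases}. The equality $h^{2,(1,1)}=h^{3,(2,2)}$ simply falls out of this computation: both $H^{2,(1,1)}(U\cap V)$ and $H^{3,(2,2)}(U\cap V)$ vanish, so both numbers equal the corresponding $\dim H^{\bullet}(U)+\dim H^{\bullet}(V)$, and these already agree by the $m=2$ table. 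No curious Lefschetz or Euler-characteristic argument is used, and the constant $C$ is read off directly as $\gcd(a,b)+\gcd(a,c)-2$.

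Your proposal has two inaccuracies and one real gap. The claim that $\mathrm{Gr}^W_6 H^3=\BQ$ for \emph{every} smooth connected affine threefold is false (complements of generic hyperplane arrangements in $\BP^3$ give counterexamples), so the diagonal row genuinely requires the cluster-specific top-weight description from \cite{LS}, not a general principle. A Louise step is not ``a strictly smaller cluster variety twisted by a $\BG_m$-bundle'' but a three-term Mayer--Vietoris cover $\CA=\CA_{\{i\}}\cup\CA_{\{j\}}$ with intersection $\CA_{\{i,j\}}$; the recursion you need must track all three pieces. Most importantly, neither of your proposed resolutions of the symmetry $h^{2,(1,1)}=h^{3,(2,2)}$ goes through as stated: once the diagonal row is known, Euler-characteristic bookkeeping yields only the single equation $\chi(\CA)=h^{2,(1,1)}-h^{3,(2,2)}$, so you would still need an independent proof that $\chi(\CA)=0$; and producing a curious-Lefschetz class in odd dimension is precisely what \cite{LS} does \emph{not} supply. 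The paper sidesteps this obstacle entirely by the direct computation above.
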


The strategy is to inductively apply Mayer-Vietoris sequence to the covering induced by the freezing operation of the separating edge in the Louise algorithm, following \cite{LS}. In the induction process, we use de Rham cohomology theory to choose good differential forms to represent a basis of Deligne splitting $H^{k,(p,p)}$ on each open set and glue them to give a global form using Mayer-Vietoris sequences.

The non-full rank case is far more complicated for the following two reasons. On one hand, the cluster varieties of non-full rank could have singularities, so de Rham cohomology fails and it is not known how to compute using the Mayer-Vietoris sequence even if we still have the open cover. On the other hand, the mixed Hodge structures of the induction basis, called isolated cluster varieties, are unknown in non-full rank case. Nevertheless, many cluster varieties of non-full rank are still smooth, especially when the directed graph is an acyclic triangle, we have:

\begin{thm}[Proposition \ref{3m}] \label{1.2}
Let $\CA$ be the cluster variety associated with 
\begin{center}
\begin{tikzpicture}
\tikzstyle{round}=[circle,thick,draw=black!75, minimum size=4mm];
\tikzstyle{square}=[rectangle, thick,draw=black!75, minimum size=6mm];
\begin{scope}
\node[round] (1) {$x$};
\node[round](2)[right of =1]{$y$};
\node[round](3)[below of =1]{$z$};
\draw (1) edge[->,"$a$"] (2);
\draw (1) edge[->,"$b$",swap] (3);
\draw (2) edge[->,"$c$"] (3);
\end{scope}    
\end{tikzpicture}
\end{center}
where $a,b,c$ are positive integers. Then the mixed Hodge structure of $\CA$ is of mixed Tate type and the mixed Hodge numbers $h^{k,(p,p)}(\CA)$ are
\begin{center}
\begin{tabular}{c|cccc}
$k-p$ & $H^0$& $H^1$& $H^2$&$H^3$ \\
\hline
$0$ & $1$ & $0$ & $1$ & $1$\\
$1$ & $0$ &$0$& $C$& $C+1$ \\
\end{tabular}
\end{center}
where $C= \gcd(a, b) +\gcd(a,c) +\gcd(b,c) - 3$. In particular, the mixed Hodge structure is of mixed Tate type but does not satisfy the numerical curious hard Lefschetz property.  
\end{thm}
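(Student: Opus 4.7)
The plan is to apply the inductive Mayer-Vietoris scheme that underlies the proof of Theorem \ref{1.1}, exploiting the observation that although the $3\times 3$ exchange matrix of the triangle has rank only $2$, freezing \emph{any} single vertex produces a full rank $3\times 2$ matrix, so the resulting pieces are smooth cluster varieties to which Theorem \ref{1.1} applies directly.

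By Proposition \ref{finite} the triangle is of Louise type, with any of its three edges as a separating edge; I would pick, say, $x\to z$ and form the cover
\[
\CA=U_x\cup U_z,\qquad U_v=\Spec\bigl(\CA(\widetilde{B})[x_v^{-1}]\bigr),
\]
where $U_x$, $U_z$, and $U_x\cap U_z$ are the affine cluster varieties attached to the quivers obtained by declaring $x$, $z$, or both frozen. A direct $2\times 2$-minor computation (for instance, the minor $\left(\begin{smallmatrix} a & b\\ 0 & c\end{smallmatrix}\right)$ of determinant $ac\neq 0$ survives after freezing $x$) shows the three reduced exchange matrices are all of full rank. Consequently each of $U_x$, $U_z$, $U_x\cap U_z$ is smooth, and Theorem \ref{1.1} gives its mixed Hodge numbers explicitly, in particular showing each piece is of mixed Tate type.

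These numbers would then be assembled through the Mayer-Vietoris sequence
\[
\cdots\to H^{k-1}(U_x\cap U_z)\to H^k(\CA)\to H^k(U_x)\oplus H^k(U_z)\to H^k(U_x\cap U_z)\to\cdots
\]
in the category of mixed Hodge structures, which splits weight by weight into short exact sequences of pure Tate structures by strictness of MHS morphisms. Mixed Tate type of $\CA$ follows at once. The arithmetic contributions $\gcd(a,b)$, $\gcd(a,c)$, $\gcd(b,c)$ enter through the factorisations
\[
1+y^az^b=\prod_{\zeta^{\gcd(a,b)}=-1}\bigl(1-\zeta\, y^{a/\gcd(a,b)}z^{b/\gcd(a,b)}\bigr)
\]
(and the analogous factorisations of $x^a+z^c$ and $1+x^by^c$), whose irreducible factors each contribute a divisor class to the weight-$2$ cohomology of the corresponding piece.

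The delicate step will be the exact bookkeeping: verifying that the constants coming from Theorem \ref{1.1} applied to the three pieces combine, through the Mayer-Vietoris connecting maps, into precisely the symmetric expression $\gcd(a,b)+\gcd(a,c)+\gcd(b,c)-3$ in $h^{2,(1,1)}(\CA)$ and one more in $h^{3,(2,2)}(\CA)$ (the $-3$ offsets the ``trivial'' component of each binomial already counted in the pieces). Once the Hodge table is established, the final assertion is immediate: at $d=3$ and $(k,i)=(2,1)$ the numerical curious hard Lefschetz property would require $\dim\textup{Gr}^W_4 H^2(\CA)=\dim\textup{Gr}^W_8 H^4(\CA)$, but the table yields $\dim\textup{Gr}^W_4 H^2(\CA)=h^{2,(2,2)}(\CA)=1$ while $H^4(\CA)=0$ since $\CA$ is affine of complex dimension $3$, so the equality fails.
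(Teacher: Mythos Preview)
Your proposal is correct and follows essentially the same route as the paper: cover $\CA$ via the Louise algorithm by freezing the endpoints of a separating edge, apply Theorem~\ref{1.1} (i.e.\ Propositions~\ref{1m} and~\ref{2m}) to the full-rank pieces, and run Mayer--Vietoris weight by weight. The paper picks the edge $y\to z$ rather than $x\to z$ and carries out the ``delicate bookkeeping'' you anticipate by tracking explicit log-form bases rather than binomial factorisations, finding in particular that $f^{2,(2,2)}$ has one-dimensional cokernel, which is precisely the extra ``$+1$'' in $h^{3,(2,2)}$.
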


There are two types of 3-dimensional cluster varieties not covered by Theorem \ref{1.1} and \ref{1.2}. They are defined by the following directed graphs.
\begin{center}
\begin{tikzpicture}
\tikzstyle{round}=[circle,thick,draw=black!75, minimum size=4mm];
\tikzstyle{square}=[rectangle, thick,draw=black!75, minimum size=6mm];
\begin{scope}
\node[square] (1) {$z$};
\node[round] (2) [left of = 1]{$x$};
\node[round] (3) [right of = 1]{$y$};
\draw (2) edge[->,"$a$"] (1);
\draw (3) edge[->,"$b$",swap] (1);
\node at (0,-0.5) {Case 1};
\node[round] (1)  at (5,0) {$z$};
\node[round] (2) [left of = 1]{$x$};
\node[round] (3) [right of = 1]{$y$};
\draw (2) edge[->,"$a$"] (1);
\draw (3) edge[->,"$b$",swap] (1);
\node at (5,-0.5) {Case 2};
\end{scope}
\end{tikzpicture}
\end{center}
Case 2 can easily be reduced to Case 1 using Mayer-Vietoris sequence and Louise algorithm. However, Case 1 involves singular cluster varieties and is complicated. We illustrate the calculation of the mixed Hodge numbers when $a=b=1$.

\begin{thm} [Theorem \ref{h},\ref{ih}]
   Let $\CA$ be the singular cluster variety associated with the directed diagram
   \begin{center}
\begin{tikzpicture}
\tikzstyle{round}=[circle,thick,draw=black!75, minimum size=4mm];
\tikzstyle{square}=[rectangle, thick,draw=black!75, minimum size=6mm];
\begin{scope}
\node[square] (1) {$z$};
\node[round] (2) [left of = 1]{$x$};
\node[round] (3) [right of = 1]{$y$};
\draw (2) edge[->,"$1$"] (1);
\draw (3) edge[->,"$1$",swap] (1);
\end{scope}
\end{tikzpicture}
\end{center}
Then its mixed Hodge numbers of cohomology $h^{k,(p,p)}(\CA)$ and intersection cohomology $ih^{k,(p,p)}(\CA)$ are
\begin{center}
\begin{tabular}{c|cccc}
$k-p$ & $H^0$& $H^1$& $H^2$&$H^3$ \\
\hline
$0$ & $1$ &$1$&$2$&$1$\\
  &     &   & &   
\end{tabular}
~~~~~~\begin{tabular}{c|cccc}
$k-p$ & $IH^0$& $IH^1$& $IH^2$&$IH^3$ \\
\hline
$0$ & $1$ &$1$&$2$&$1$\\
$1$ &     &   &$1$&   
\end{tabular}
\end{center}
In particular, the mixed Hodge structures are of mixed Tate type but do not satisfy the numerical hard Lefschetz property.
\end{thm}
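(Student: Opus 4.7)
The plan is to realize $\CA$ as an explicit affine variety and compute both $H^*(\CA)$ and $IH^*(\CA)$ using the long exact sequence of a pair together with the Thom isomorphism, replacing $V$ by a small resolution when needed for intersection cohomology. Working out the exchange relations at the two mutable vertices gives $xx' = 1+z$ and $yy' = 1+z$; the frozen variable $z$ is invertible in the cluster algebra, so eliminating it realizes $\CA$ as $V \setminus D$, where $V = \{xx' = yy'\} \subset \BC^4$ is the three-dimensional affine conifold and $D = V \cap \{xx' = 1\} \cong (\BC^*)^2$. The only singular point of $V$ is the origin, which lies in $\CA$; the divisor $D$ is contained in the smooth locus $V^{\mathrm{sm}}$.

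For the ordinary cohomology, $V$ is $\BC^*$-equivariantly contractible to the origin, so $H^0(V) = \BC(0)$ and $H^{>0}(V) = 0$. Since $D \subset V^{\mathrm{sm}}$, excising the origin from the pair $(V, V \setminus D)$ reduces it to a smooth pair, and the Thom isomorphism gives
\[
H^k(V, V\setminus D) \cong H^{k-2}(D)(-1).
\]
Plugging in $H^0(D) = \BC(0)$, $H^1(D) = \BC(-1)^{\oplus 2}$, $H^2(D) = \BC(-2)$, and using $H^{>0}(V) = 0$ to collapse the long exact sequence, one reads off $H^k(\CA) = \BC(-k)$ for $k = 0,1,3$ and $H^2(\CA) = \BC(-2)^{\oplus 2}$, matching the first table with a pure Tate MHS in each degree.

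For the intersection cohomology I use the standard small resolution $\pi: \widetilde V \to V$, where $\widetilde V \cong \mathrm{tot}(\CO_{\BP^1}(-1)^{\oplus 2})$ arises from viewing $V$ as the variety of rank-one $2 \times 2$ matrices and remembering the image line. Smallness gives $R\pi_* \BC_{\widetilde V}[3] \cong IC_V$, and because $D$ is disjoint from the contracted $\BP^1$ we have $\widetilde D := \pi^{-1}(D) \cong D$, so $IH^*(\CA) \cong H^*(\widetilde V \setminus \widetilde D)$. The same LES+Thom machine applies with $H^0(\widetilde V) = \BC(0)$, $H^2(\widetilde V) = \BC(-1)$, and $H^k(\widetilde V) = 0$ otherwise; the only additional input is the Gysin map $H^0(\widetilde D)(-1) \to H^2(\widetilde V)$, which sends the Thom class to $[\widetilde D]$. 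This class vanishes because $\widetilde D$ is the principal divisor of $u-1$, where $u = xx'$ pulls back to a regular function on $\widetilde V$. With all Gysin maps zero, the LES yields short exact sequences that pin down every weight-graded piece; in particular $\mathrm{Gr}^W_\bullet IH^2(\CA) = \BC(-1) \oplus \BC(-2)^{\oplus 2}$, reproducing the second table. Both MHSs are of mixed Tate type since every graded piece is a Tate twist of $\BC$. Numerical hard Lefschetz fails because Artin vanishing on the affine variety $\CA$ forces $H^4(\CA) = IH^4(\CA) = 0$, while $H^2(\CA)$ and $IH^2(\CA)$ are nonzero.

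The hard part is the vanishing $[\widetilde D] = 0$ in $H^2(\widetilde V) \cong \BC$: one must pin down the small resolution, check that the regular function $u - 1$ lifts without picking up an exceptional component, and verify that $\widetilde D$ remains principal on $\widetilde V$. All other steps are standard once the explicit equations and small resolution are in place.
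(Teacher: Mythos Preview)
Your argument is correct and takes a genuinely different, more direct route than the paper. For ordinary cohomology, the paper compactifies $X=\{xx'=yy'\neq 1\}$ inside $\BP^4$, blows up five points to obtain a smooth compactification with simple normal crossing boundary, and then runs the full weight spectral sequence. You instead exploit the contractibility of the cone $V=\{xx'=yy'\}$ and the Gysin sequence for the smooth divisor $D\subset V^{\mathrm{sm}}$; the excision step reducing $H^*(V,V\setminus D)$ to the smooth pair $(V\setminus\{0\},\CA\setminus\{0\})$ is exactly what makes this work as a sequence of mixed Hodge structures. For intersection cohomology, the paper blows up the cone point (exceptional divisor $\BP^1\times\BP^1$), computes the cohomology of the blow-up via the discriminant-square Mayer--Vietoris sequence, and then strips off the extra summands coming from the decomposition theorem. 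You bypass this by using the \emph{small} resolution $\widetilde V\to V$, so that $R\pi_*\BQ_{\widetilde V}[3]=IC_V$ with no correction term, and a second Gysin sequence finishes the job; the key vanishing $[\widetilde D]=0\in H^2(\widetilde V)$ via the principal divisor $xx'-1$ is the one nontrivial input, and your verification that the exceptional curve does not meet $\{xx'=1\}$ is correct.

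What each approach buys: yours is shorter and pinpoints the weights immediately via Tate twists in the Gysin sequence; the paper's method is heavier but does not depend on the existence of a small resolution and illustrates the general weight-spectral-sequence machinery that would be needed for higher edge weights $a,b$. One small imprecision: your last sentence on the failure of numerical curious hard Lefschetz should refer to the weight-graded pieces rather than the total cohomology (the relevant instance is $\mathrm{Gr}^W_4 H^2\neq 0$ versus $\mathrm{Gr}^W_8 H^4=0$, and similarly for $IH$), but the conclusion is of course the same.
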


We use classical mixed Hodge-theoretic approaches such as compactification and resolution of singularities together with weight spectral sequence to calculate the mixed Hodge structures. We anticipate that a similar method will apply to the case when the weights in the graph other than 1.

The motivation of finding the mixed Hodge numbers of cluster varieties is to understand the $P=W$ phenomenon for cluster varieties. In \cite{Z,Z1}, a $P=W$ phenomenon is conjectured for cluster varieties as a real analytic counterpart of the $P=W$ conjecture for Higgs bundles proposed in \cite{dCHM} and proved in \cite{MS,HMMS,MSY} independently. The $P=W$ phenomenon for cluster varieties predicts that for any cluster variety $\CA$, there is a proper real analytic torus fibration $h:\CA\to \BR^{\dim_\BC \CA}$ such that the mixed Hodge-theoretic weight filtration of $\CA$ is identified with the perverse filtration associated with $h$. In \cite{Z}, 2-dimensional cases are proved. In \cite{Z1}, the conjectural morphisms $h$ are constructed for isolated cluster varieties, and the $P=W$ phenomenon is proved for isolated cluster varieties of full rank. For non-isolated case, there are no known systematic construction of the morphism $h$ so far. We anticipate that the of mixed Hodge numbers of 3-dimensional cluster varieties may shed some lights on constructing the morphism $h$ by providing conjectural perverse numbers, monodromy groups and topological properties of the fibers of the fibers of $h$. 

\subsection{Outline} This paper is organized as follows. 

In Section 2, we review the definition of cluster varieties and the Louise algorithm. We also discuss the mixed Hodge structures of 2-dimensional cluster varieties.

In Section 3, we study the mixed Hodge structure of smooth 3-dimensional clusters by applying the Mayer-Vietoris sequences to the separating edges in the Louise algorithm. We calculate the mixed Hodge numbers and show that the numerical curious Lefschetz property fails if the cluster variety is not of full rank. We also present a basis using algebraic differential forms for the Deligne splitting when the cluster varieties are of full rank. When the graph does not satisfy the Louise property, we also show that the corresponding cluster algebra is not of finite type.

In Section 4, we study a specific singular cluster variety and compute its mixed Hodge numbers by constructing an explicit logarithmic resolution and apply the weight spectral sequence. We also compute the mixed Hodge structure of its intersection cohomology groups. 

\subsection{Acknowledgements.} We thank Mark de Cataldo, Lie Fu, Thomas Lam, Wanchun Shen, David Speyer and Xiping Zhang for helpful conversations. The second author is partially supported by the Fundamental Research Funds for the Central Universities.

\section{Preliminaries} 

\subsection{Cluster varieties} \label{2}

An \textit{extended exchange matrix} $\widetilde{B} = (\widetilde{B}_{ij})$ is a $(n + m) \times n$ matrix with integer coefficients, whose upper $n \times n$ submatrix, the \textit{principal part} $B$, is skew-symmetric. A \textit{seed} $t = (x, \widetilde{B})$ consists of $n + m$ algebraically independent variables $x_1, x_2, \dots, x_{n+m}$ and an extended exchange matrix $\widetilde{B}$. The variables $x_1, \dots, x_n$ are called \textit{mutable variables} and $x_{n+1}, \dots, x_{n+m}$ are called \textit{frozen variables}. We sometimes write $y_1,\dots,y_m$ for $x_{n+1},\dots,x_{n+m}$.

The \textit{mutation of a seed} $t = (x, \widetilde{B})$ \emph{in the direction} $k \in \{1, 2, \dots, n\}$ generates a new seed $t' = \mu_k(t)= (x', \widetilde{B}') $, following the rules:

\begin{enumerate}
\item The mutated matrix $\widetilde{B}' = \mu_k(\widetilde{B})$ is defined by:
\begin{equation}  \label{mutation1}
\widetilde{B}'_{ij} = 
\begin{cases} 
    -\widetilde{B}_{ij}, & \text{if } i = k \text{ or } j = k, \\
    \widetilde{B}_{ij} + [\widetilde{B}_{ik}]_+ [\widetilde{B}_{kj}]_+ - [\widetilde{B}_{ik}]_- [\widetilde{B}_{kj}]_-, & \text{otherwise},
\end{cases}
\end{equation}
where $[x]_+ = \max(x, 0)$ and $[x]_- = \min(x, 0)$.

\item The mutated variables $x'_i$ are given by:
\begin{equation} \label{mutation2}
x'_i = 
\begin{cases} 
    \frac{\prod x_j^{[\widetilde{B}_{jk}]_+} + \prod x_j^{-[\widetilde{B}_{jk}]_-}}{x_k}, & \text{if } i = k, \\
    x_i, & \text{if } i \neq k.
\end{cases}
\end{equation}
\end{enumerate}
By repeatedly applying mutations in all possible directions, we generate a collection of seeds. Mutable variables change across the seeds, while frozen variables remain invariant. The \emph{cluster algebra} defined by the initial seed $t$, denoted as $A(x, \widetilde{B})$ or simply $A(\widetilde{B})$ or $A$, is defined as the $\mathbb{C}$-subalgebra of the rational function field $\mathbb{C}(x_1, x_2, \dots, x_{n+m})$ generated by all mutable variables during the mutations and the inverses of the frozen variables. The \emph{cluster variety} is the affine scheme:
\[
\mathcal{A}(x, \widetilde{B}) = \text{Spec}\, A(x, \widetilde{B}).
\]

We remark that the cluster algebra are not always of finite type, and hence when we talk about cluster varieties we always have to check that the cluster algebra is of finite type. The Louise property is introduced to ensure finite generation. To illustrate the Louise property, we need the following equivalent definition of cluster varieties via directed graphs. 

Given a seed $t = (x, \widetilde{B})$, the associated directed graph $\Gamma(t) = \Gamma(\widetilde{B})$ has vertices $\{1, \dots, n+m\}$, where there is a directed edge of weight $\widetilde{B}_{ij}$ from $i$ to  $j$ if $\widetilde{B}_{ij} > 0$. The vertices $1,\dots, n$ representing mutable variables are called mutable vertices, and are denoted by circles in the graph. The remaining vertices are frozen vertices, denoted by squares in the graph. The subgraph of mutable vertices is called the mutable subgraph, denoted as $\Gamma(t)_{mut}$. When $\Gamma(t)_{mut}$ has no edges, the corresponding cluster variety is called \textit{isolated}. 

To freeze a subset of mutable variables $S\subset\{1,\dots,n\}$ is simply to regard all $x_i$ as frozen variables for $i\in S$. We denote $A_S$ the cluster algebra after freezing the cluster variables $\{x_i \mid i \in S\}$. In this situation, $A_S \cong A[x_i^{-1}\mid i\in S]$. A direct edge $e:i \to j$ in $\Gamma(t)_{mut}$  is called a \textit{separating edge} if there is no bi-infinite path through $e$.  \cite[Proposition 2.1]{Muller2013} shows that if $i \to j$ is a separating edge, then $x_i$ and $x_j$ are not simultaneously zero on the cluster variety $\mathcal{A}$, \emph{i.e.} $\mathcal{A}$ is covered by the two open subsets $\text{Spec } \mathcal{A}[x_i^{-1}]$ and $\text{Spec } \mathcal{A}[x_j^{-1}]$.

A cluster algebra $A$ satisfies the \textit{Louise property} if it satisfies:
\begin{enumerate}
    \item For some seed $t$ of $A$, the quiver $\Gamma(t)_{mut}$ has no edges, or
    \item For some seed $t = (x, \widetilde{B})$ of $A$, the quiver $\Gamma(t)_{mut}$ has a separating edge $i \to j$, such that the cluster algebras $A_{\{i\}}$, $A_{\{j\}}$, and $A_{\{i, j\}}$ all satisfy the Louise property.
\end{enumerate}

The recursive operations of freezing vertices of separating edges are called the Louise algorithm. When the cluster algebra $A$ satisfy the Louise property, we say that the corresponding variety is of Louise type. It follows from the definition that if $\mathcal{A}$ of Louise type, then all cluster varieties arising in the recursive definition also of Louise type. In particular, as established in \cite{Muller2013}, cluster varieties defined by acyclic graphs are of Louise type.

Let $\CA$ be a cluster variety of Louise type. If $\CA$ is isolated with $n$ mutable and $m$ frozen variables, then the mutations of $\CA$ in different directions are commutative and hence $\CA$ is of dimensional $n+m$ with $n$ defining equations
\[
\begin{cases}
x_1x_1'&=\prod_j y_j^{[\widetilde{B}_{j,1}]_+}+\prod_j y_j^{[\widetilde{B}_{j,1}]_-},\\
&\cdots\\
x_nx_n'&=\prod_j y_j^{[\widetilde{B}_{j,n}]_+}+\prod_j y_j^{[\widetilde{B}_{j,n}]_-}\\
\end{cases}
\]
in $\BC^{2n}\times(\BC^*)^m=\Spec \BC[x_1,\dots, x_n,y_1^{\pm1},\dots,y_m^{\pm1}]$.
 
If $\CA$ is not isolated, then by definition there exists a separating edge $i\to j$ such that $\CA=\CA_{\{i\}}\cup\CA_{\{j\}}$ and $\CA_{\{i\}}\cap\CA_{\{j\}}=\CA_{\{i,j\}}$. Since freezing variables inverts the corresponding variables in the cluster algebra, the cluster variety obtained from freezing operation is an open subvariety. Therefore, an inductive argument implies that the dimension of cluster variety $\CA$ of Louise type is always $n+m$ and iterated Mayer-Vietoris arguments applied to the covering $\CA=\CA_{\{i\}}\cup\CA_{\{j\}}$ in the Louise algorithm calculate the Hodge structure of $\CA$.

\subsection{Deligne Splitting for Mixed Hodge Structures} \label{tate}
Let $(V,W,F)$ be a mixed Hodge structure. Then the graded piece $(\textup{Gr}_{p+q}^WV,F)$ is a pure Hodge structure of weight $p+q$. When $V=H^k(X)$ is the cohomology group of an algebraic variety $X$ over $\BC$, we denote by $H^{k,(p,q)}$ its $(p,q)$ component $(\textup{Gr}_{p+q}H^k(X))^{pq}$. The decomposition of \(H^k(X, \mathbb{C})\) into subspaces \(H^{k,(p,q)}\) is known as the Deligne splitting. Their dimensions \(h^{k,(p,q)} := \dim H^{k,(p,q)}\) are called the mixed Hodge numbers of $X$. We say that the mixed Hodge structure $H^*(X)$ is of \textit{mixed Tate type} if $H^{k,(p,q)} = 0 $ for $p \neq q.$ The following result follows formally from the fact that the Deligne splitting is functorial and is respected by the boundary maps in Mayer–Vietoris sequences:

\begin{lem} \label{MVH}
Suppose that a complex algebraic variety $X$ is covered by two open subvarieties \(U\) and \(V\). If the cohomology groups of \(U\), \(V\), and \(U \cap V\) are of mixed Tate type, then so is \(X\). In this case, denote
\[
f^{k,(p,p)}: H^{k,(p,p)}(U)\oplus H^{k,(p,p)}(V) \to H^{k,(p,p)}(U\cap V),
\]
then there is a non-canonical isomorphism of complex vector spaces
\[
H^{k,(p,p)}(X)=\ker f^{k,(p,p)}\oplus \textup{coker} f^{k-1,(p,p)}.
\]
\end{lem}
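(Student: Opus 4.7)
The plan is to bootstrap the whole lemma from the standard fact (Deligne) that the Mayer--Vietoris long exact sequence
\[
\cdots \to H^{k-1}(U\cap V) \to H^k(X) \to H^k(U)\oplus H^k(V) \to H^k(U\cap V) \to \cdots
\]
is a long exact sequence in the abelian category of mixed Hodge structures, i.e.\ all the connecting maps and restriction maps are morphisms of MHS. The verification of this is classical and I would simply cite it (e.g.\ Peters--Steenbrink).

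Next I would invoke the functoriality of the Deligne splitting: the assignment $H \mapsto H^{(p,q)} := (\textup{Gr}^W_{p+q}H)^{p,q}$ is an exact functor from $\mathbb{C}$-mixed Hodge structures to $\mathbb{C}$-vector spaces, because taking $\textup{Gr}^W$ is exact (the weight filtration is strict) and picking a bigraded piece of a pure Hodge structure is also exact. Applying this to the Mayer--Vietoris sequence above yields, for every bidegree $(p,q)$, a long exact sequence of complex vector spaces
\[
\cdots \to H^{k-1,(p,q)}(U\cap V) \to H^{k,(p,q)}(X) \to H^{k,(p,q)}(U)\oplus H^{k,(p,q)}(V) \to H^{k,(p,q)}(U\cap V) \to \cdots
\]

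Now the two conclusions drop out. For the mixed Tate type assertion, fix $p\neq q$: by hypothesis every term in the above sequence involving $U$, $V$ or $U\cap V$ vanishes, so by exactness $H^{k,(p,q)}(X)=0$ as well. For the formula in the $(p,p)$ case, the long exact sequence degenerates into short exact sequences
\[
0 \to \textup{coker}\, f^{k-1,(p,p)} \to H^{k,(p,p)}(X) \to \ker f^{k,(p,p)} \to 0,
\]
which split (non-canonically) because we are working with vector spaces; choosing any such splitting gives the claimed identification.

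There is no real obstacle here: both ingredients---the Mayer--Vietoris sequence in the category of MHS and the exactness of the Deligne splitting functor---are standard, so the bulk of the write-up is just to isolate them cleanly and chase the resulting exact sequences. The only mild subtlety worth flagging in the text is that the isomorphism in the statement is an isomorphism of vector spaces, not of Hodge structures, since the splitting of a short exact sequence of $\mathbb{C}$-vector spaces is not canonical.
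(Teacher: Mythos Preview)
Your proposal is correct and is exactly the argument the paper has in mind: the paper states without further detail that the lemma ``follows formally from the fact that the Deligne splitting is functorial and is respected by the boundary maps in Mayer--Vietoris sequences,'' and your write-up simply spells this out. The only cosmetic point is that in the $(p,p)$ case the long exact sequence does not literally ``degenerate''; rather you are extracting, as always from a long exact sequence, the short exact sequence $0\to\textup{coker}\,f^{k-1,(p,p)}\to H^{k,(p,p)}(X)\to\ker f^{k,(p,p)}\to 0$, which then splits over $\mathbb{C}$.
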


We say the mixed Hodge structure on $H^*(X)$ satisfies the \emph{curious hard Lefschetz property} if there exists a class $\gamma\in W_4H^{2}(X)$ such that for any $i,k\ge0$ the cupping map satisfies
\[
\gamma^{i}:\textup{Gr}_{2d-2i}^WH^{k}(X)\xrightarrow{\cong} \textup{Gr}_{2d+2i}^WH^{k+2i}(X),
\]
where $d=\dim_\BC X$. We say that the mixed Hodge structure of $H^*(X)$ satisfies the \textup{numerical curious hard Lefschetz property} if the identities above are equalities for dimensions only.

\subsection{Two conventions} \label{notation}
Since cluster varieties are partial compactification of complex tori $(\BC^*)^k$, the language of logarithmic forms are very useful to describe the cohomology groups of cluster varieties. To simplify notations, we introduce two conventions when using log forms in this section.

Let $(\BC^*)^k$ be a torus with coordinates $x_1,\cdots,x_k$. Then the regular differential forms on $(\BC^*)^k$ are of the type
\[
\omega=\sum_{I=\{i_1,\cdots,i_d\}\subset \{1,\cdots k\}} f_Id\log x_{i_1}\wedge\cdots\wedge d\log x_{i_d}, ~~ f_I\in \BC[x_1^{\pm1},\cdots,x_k^{\pm 1}].
\]
We remark that when all functions $f_I$ are constant, they are nothing but the log forms defined in \cite{LS}, and log $p$-forms are always of Hodge type $H^{p,(p,p)}$. Since such forms will appear repeatedly in the study of cluster varieties, we introduce the following convention to simplify notations.

\medskip 

\noindent
{\bf Convention.}  We will  write $\underline{x}$ for $d\log x$. For higher differential forms we simply drop wedge symbols, \emph{e.g.} $\underline{x}\wedge \underline{y}$ is simplified as $\underline{xy}$.  

In applications, the variables appear in the log forms are the cluster variables in the initial seed, so there won't be ambiguities.
\medskip

Let $X$ be a smooth cluster variety of Louise type, and $U\cong (\BC^*)^{n+m}$ be one its cluster tori. Then $U$ is open dense in $X$. Although it is not guaranteed in general that a differential form on $U$ can be extended to $X$, but if such extension exists, it is unique. Furthermore, if a differential form on $U$ is closed and extends to $X$, then the extension is also closed. Therefore, to represent a cohomology class on $X$ by a differential form, it suffices to specify a closed differential form on $U$ which is extendable to $X$. To simplify notation, we will use the following convention.

\medskip
\noindent

\noindent{\bf Convention.} Let $X$ be a smooth variety and $U\subset X$ be an open dense subset. Let $\alpha\in H^k(X)$ be a cohomology class and let $\omega\in\Gamma(U,\Omega^k)$ be a differential form on $U$. We will simply say $\alpha$ is represented by $\omega$ if $\omega$ admits a regular extension to $X$ which represents $\alpha$.  \\

\noindent

\noindent{\bf Warning.} In the notations in the Convention above, the statement is \emph{not} equivalent to $\alpha|_U$ is represented by $\omega$. In fact, it is possible that $\omega$ is exact on $U$ but its unique extension to $X$ is not exact.

\subsection{Mixed Hodge structures of 2D cluster varieties}
Cluster varieties of dimension 2 are studied in \cite[Section 7]{LS} and \cite{Z}. It is straightforward from the definition that cluster varieties associated with a weighted directed graph with 2 vertices are always of Louise type. We recall the following result.
\begin{thm} \label{2d}
Let $\CA$ be a cluster variety with 1 mutable and 1 frozen variable. Then the corresponding graph $\Gamma$ is of the form 
\begin{center}
\begin{tikzpicture}
\tikzstyle{round}=[circle,thick,draw=black!75, minimum size=4mm];
\tikzstyle{square}=[rectangle, thick,draw=black!75, minimum size=6mm];

\begin{scope}
\node[round] (1) {$x$};
\node[square] (2) [right of = 1]{$y$};
\draw (1) edge[->,"$a$"] (2);
\end{scope}
\end{tikzpicture}
\end{center}
and the cluster variety is defined by the equation
\[
\CA=\{(x,y,z)\in\BC^3\mid xx'=y^a+1,~~y\neq0\}.
\]
The mixed Hodge structure on the cohomology of $X$ is of mixed Tate type, and the mixed Hodge numbers $\dim H^{k,(p,p)}$ are
\begin{center}
\begin{tabular}{c|ccc}
$k-p$ & $H^0$& $H^1$& $H^2$ \\
\hline
$0$ & $1$ &$1$&$1$\\
$1$ & &  & $a-1$
\end{tabular}
\end{center}
Furthermore, following the Conventions in Section \ref{notation}, the Deligne splitting admits a basis which are represented by the following differential forms
\begin{center}
\begin{tabular}{c|ccc}
$k-p$ & $H^0$& $H^1$& $H^2$ \\
\hline
$0$ & $\langle1\rangle$ &$\langle\underline{y}\rangle$&$\langle\underline{xy}\rangle$\\
$1$ & $0$& $0$ & $\langle y^i\underline{xy}\mid 1\le i\le a-1\rangle$
\end{tabular}
\end{center}
\end{thm}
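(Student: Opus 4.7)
The defining equation is immediate from the mutation rule \eqref{mutation2}: applied to the single mutable variable $x$, it gives $x'=(y^a+1)/x$, so $\CA=\Spec\BC[x,x',y^{\pm 1}]/(xx'-y^a-1)$. The gradient of $xx'-y^a-1$ has last component $-ay^{a-1}$, nowhere vanishing on $\{y\neq 0\}$, so $\CA$ is smooth of dimension $2$.

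My plan for the cohomology is to apply Mayer--Vietoris to the affine open cover $U=\CA\cap\{x\neq 0\}$ and $V=\CA\cap\{x'\neq 0\}$. Solving the defining equation for the remaining variable identifies $U\cong V\cong(\BC^*)^2$, and $U\cap V\cong\BC^*\times(\BC^*\setminus\{\zeta_1,\dots,\zeta_a\})$ where $\zeta_j^a=-1$, i.e.\ an $(a+2)$-punctured sphere times a torus. All three pieces are complements of normal crossings divisors in tori, hence are of mixed Tate type with log-form generators, so Lemma~\ref{MVH} forces $\CA$ to be mixed Tate. The key input for making the restriction maps explicit is the identity $\underline x+\underline{x'}=d\log(y^a+1)=\sum_{j=1}^a d\log(y-\zeta_j)$ on $U\cap V$, obtained by taking $d\log$ of $xx'=y^a+1$. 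In degree $1$ the image of $H^1(U)\oplus H^1(V)\to H^1(U\cap V)$ is then three-dimensional, spanned by $\underline x$, $\underline y$, and $\sum_j d\log(y-\zeta_j)$. In degree $2$, combining the relation with $d\log(y^a+1)\wedge\underline y=0$ yields $\underline{xy}=-\underline{x'y}$ on $U\cap V$, so the map $H^2(U)\oplus H^2(V)\to H^2(U\cap V)$ has rank $1$. A routine rank count in the Mayer--Vietoris sequence then produces the claimed dimensions.

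For the explicit Deligne representatives, $1$ and $\underline y$ are the evident generators of $H^{0,(0,0)}$ and $H^{1,(1,1)}$ (the latter is the diagonal class in the kernel of the degree-$1$ restriction). The form $\underline{xy}$ is regular on $U$, and the identity $\underline{xy}=-\underline{x'y}$ shows it extends regularly to $V$, giving the generator of $H^{2,(2,2)}$. The main obstacle is the $(a-1)$-dimensional $H^{2,(1,1)}$ piece, which I propose to represent by the forms $y^i\underline{xy}$ for $1\le i\le a-1$. The same identity shows these extend regularly to $\CA$, and on $U$ we have $y^i\underline{xy}=-\tfrac{1}{i}d(y^i\underline x)$ while on $V$ we have $y^i\underline{xy}=\tfrac{1}{i}d(y^i\underline{x'})$. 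Under the Mayer--Vietoris connecting map they therefore correspond in $H^1(U\cap V)$ to $-\tfrac{1}{i}y^i d\log(y^a+1)$ modulo the image of $H^1(U)\oplus H^1(V)$. A partial-fractions expansion reduces this, modulo exact polynomial $1$-forms on the $\BC^*_y$-factor, to $-\tfrac{1}{i}\sum_j\zeta_j^i\,d\log(y-\zeta_j)$, and a short Vandermonde-type argument---if a polynomial of degree $\le a-1$ with no constant term takes the same value at the $a$ distinct points $\zeta_1,\dots,\zeta_a$, it must vanish---shows that these $a-1$ classes are linearly independent in the cokernel, completing the computation.
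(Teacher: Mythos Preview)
The paper does not give its own proof of this theorem; it is quoted from \cite[Section 7]{LS} and \cite{Z}, with only the added remark that the forms $y^i\underline{xy}$ extend from the torus $U=\{x\neq 0\}$ to all of $\CA$ (citing \cite[Section 7.1]{LS}). So there is no in-paper argument to compare against, but your approach has a gap worth flagging.

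The open sets $U=\{x\neq 0\}$ and $V=\{x'\neq 0\}$ do \emph{not} cover $\CA$: their union misses the $a$ points $(0,0,\zeta_j)$ with $\zeta_j^a=-1$, the deep locus of this cluster variety. This is visible from Euler characteristics, since $\chi(U)+\chi(V)-\chi(U\cap V)=0$ while $\chi(\CA)=a$; equivalently, your Mayer--Vietoris run one step further gives $\dim H^3(U\cup V)=a$ from the $a$-dimensional cokernel of the degree-$2$ restriction $\BQ^2\to\BQ^{a+1}$, contradicting $H^3=0$ for a smooth affine surface. The fix is cheap: since $\CA$ is smooth and the missing locus $Z$ is finite, purity gives $H^k_Z(\CA)=0$ for $k<4$, so restriction is an isomorphism of mixed Hodge structures $H^k(\CA)\xrightarrow{\sim}H^k(U\cup V)$ for $k\le 2$, and your computation then yields the correct answer for $\CA$. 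Likewise, the identity $y^i\underline{xy}=-y^i\underline{x'y}$ only shows these forms extend to $U\cup V$; to extend across the deep points, use $ay^{a-1}dy=x'\,dx+x\,dx'$ to rewrite $\underline{xy}=\frac{1}{ay^a}\,dx\wedge dx'$, which is regular wherever $y\neq 0$. With these two patches your Vandermonde argument goes through and the proof is complete.
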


We remark that the differential forms $y^i\underline{xy}$ ($0\le i\le a-1$) which are defined a priori on $U=\CA\setminus\{x=0\}\cong\BC^*\times\BC^*$ do extend to $\CA$; see \cite[Section 7.1]{LS}.

\section{Smooth cluster varieties in dimension 3}\label{smooth}
\setcounter{subsection}{-1}
In this section, we present an explicit calculation of the mixed Hodge numbers of smooth 3-dimensional cluster varieties. We will show that all the cohomology classes are of mixed Tate type in the sense of Section \ref{tate} and present them in terms of algebraic differential forms. We will simply use $\CA$ for cluster varieties without the reference to the matrix when no confusion arises. Since the mixed Hodge numbers $\dim H^{k,(p,p)}(\CA)$ are mainly concentrated where $p$ is slightly less than or equal to $k$, they will be listed in tables indexed by $k$ and $k-p$.

\subsection{0 mutable vertices}
If all vertices are frozen, the cluster variety is isomorphic to $(\BC^*)^3$. Since $\BC^*$ has two nontrivial cohomology classes with $\dim H^{0,(0,0)}(\BC^*)=\dim H^{1,(1,1)}(\BC^*)=1$, K\"unneth formula implies that $\dim H^{k,(p,p)}(X)$ is
\begin{center}
\begin{tabular}{c|cccc}
$k-p$ & $H^0$& $H^1$& $H^2$&$H^3$ \\
\hline
0 & 1 &3&3&1
\end{tabular}
\end{center}
Following the convention in Section \ref{notation}, the Deligne splitting admits a basis represented by  
\[\begin{cases}
H^{0,(0,0)}(\CA)=\langle 1\rangle\\
H^{1,(1,1)}(\CA)=\langle \underline{x},\underline{y} , \underline{z} \rangle\\
H^{2,(2,2)}(\CA)=\langle \underline{xy}, \underline{yz}, \underline{xz}\rangle\\
H^{3,(3,3)}(\CA)=\langle \underline{x}\underline{y}\underline{z}\rangle.  
\end{cases}
\]

\subsection{1 mutable vertex}
Suppose that the cluster variety $X$ is defined by a graph with 1 mutable and 2 frozen vertices. Then the extended exchange matrix is of the form
\[
\left(\begin{array}{c}
0\\
a\\
b
\end{array}
\right)
\]
where $a$ and $b$ are not simultaneously 0. We first treat the case when one of $a,b$ is 0, say $b=0$, then the graph is
\begin{center}
\begin{tikzpicture}
\tikzstyle{round}=[circle,thick,draw=black!75, minimum size=4mm];
\tikzstyle{square}=[rectangle, thick,draw=black!75, minimum size=6mm];

\begin{scope}
\node[round] (1) {$x$};
\node[square] (2) [right of = 1]{$y$};
\node[square] (3) [right of = 2]{$z$};
\draw (1) edge[->,"$a$"] (2);
\end{scope}
\end{tikzpicture}
\end{center}
 and by definition
\[
\CA\left(\begin{array}{c}
0\\
a\\
0
\end{array}\right)=\CA\binom{0}{a}\times \BC^*=\{(x,x',y)\mid xx'=y^a+1,y\ne0\}\times \{z\mid z\ne 0\}
\]
is just a product. By Proposition \ref{2d} and K\"unneth formula,  we have

\begin{prop} \label{10}
Let $\CA$ be the cluster variety associated with the graph
\begin{center}
\begin{tikzpicture}
\tikzstyle{round}=[circle,thick,draw=black!75, minimum size=4mm];
\tikzstyle{square}=[rectangle, thick,draw=black!75, minimum size=6mm];

\begin{scope}
\node[round] (1) {$x$};
\node[square] (2) [right of = 1]{$y$};
\node[square] (3) [right of = 2]{$z$};
\draw (1) edge[->,"$a$"] (2);
\end{scope}
\end{tikzpicture}
\end{center}
where $a\neq 0$. Then the mixed Hodge numbers $h^{k,(p,p)}(\CA)$ are
\begin{center}
\begin{tabular}{c|cccc}
$k-p$ & $H^0$& $H^1$& $H^2$&$H^3$ \\
\hline
$0$ & $1$ &$2$&$2$&$1$\\
$1$ & $0$& $0$ & $a-1$ & $a-1$
\end{tabular}
\end{center}
Following the convention in Section \ref{notation}, the Deligne splitting $H^{k,(p,p)}$ admits a basis represented by 
\begin{equation} \label{11}
\begin{cases}
H^{0,(0,0)}=\langle1\rangle\\
H^{1,(1,1)}=\langle \underline{y}, \underline{z}\rangle\\
H^{2,(2,2)}=\langle \underline{xy},\underline{yz}\rangle\\
H^{3,(3,3)}=\langle \underline{xyz}\rangle\\
H^{2,(1,1)}= \langle y^i\underline{xy}\mid 1\le i\le a-1\rangle\\
H^{3,(2,2)}= \langle y^i\underline{xyz}\mid 1\le i\le a-1\rangle.\\
\end{cases}
\end{equation}
\end{prop}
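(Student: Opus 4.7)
The plan is to exploit the evident product decomposition $\CA = \CA\binom{0}{a}\times \BC^*$ already noted in the excerpt and apply the K\"unneth formula, which is compatible with mixed Hodge structures. Since both factors have cohomology of mixed Tate type (the first by Theorem \ref{2d}, the second since $H^*(\BC^*)$ is spanned by $1$ in bidegree $(0,0)$ and $\underline{z}$ in bidegree $(1,1)$), the product $\CA$ will automatically be of mixed Tate type as well. Adding the Hodge bidegrees across the tensor product then yields the numerical table row by row: the $k-p=0$ row comes from wedging the $k-p=0$ parts of the two factors (giving $1,2,2,1$), and the $k-p=1$ row comes solely from the $(2,(1,1))$-part of the first factor wedged with either $1$ or $\underline{z}$ from the second (giving $0,0,a-1,a-1$).

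For the explicit basis, I would simply tensor the bases listed in Theorem \ref{2d} with $\{1,\underline{z}\}$, obtaining the forms $1,\ \underline{y},\ \underline{z},\ \underline{xy},\ \underline{yz},\ \underline{xyz},\ y^i\underline{xy},\ y^i\underline{xyz}$ as claimed in \eqref{11}.

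The only thing that requires care, given the Conventions of Section \ref{notation}, is verifying that the products of representative forms, initially defined only on the open cluster torus $(\BC^*)^3 \subset \CA$, extend to the whole partial compactification $\CA$. This reduces to the same question on each factor: the forms $y^i\underline{xy}$ extend to $\CA\binom{0}{a}$ by \cite[Section 7.1]{LS} (as recorded in Theorem \ref{2d}), and the form $\underline{z}$ manifestly extends to $\BC^*$. Pulling back along the two projections and wedging yields regular forms on the product $\CA\binom{0}{a}\times\BC^*$ representing exactly the K\"unneth basis.

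There is no real obstacle here beyond bookkeeping; the proof is essentially an application of K\"unneth once Theorem \ref{2d} is in hand. The mild point worth flagging explicitly in the write-up is that the K\"unneth isomorphism sends tensor products of extendable closed forms to extendable closed forms, so the representatives produced on the open torus $(\BC^*)^3$ are indeed cohomology classes on $\CA$ in the sense of our conventions.
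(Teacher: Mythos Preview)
Your proposal is correct and follows exactly the paper's approach: the paper observes just before Proposition \ref{10} that $\CA = \CA\binom{0}{a}\times\BC^*$ and then states the proposition as an immediate consequence of Theorem \ref{2d} and the K\"unneth formula, without further elaboration. Your added remarks on extendability of the tensor-product forms are a welcome bit of extra care but do not change the argument.
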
 

\medskip

Suppose that $a,b$ are both nonzero. Then the cluster varieties with 1 mutable variables are defined by directed graphs of the form

\begin{center}
\begin{tikzpicture}
\tikzstyle{round}=[circle,thick,draw=black!75, minimum size=4mm];
\tikzstyle{square}=[rectangle, thick,draw=black!75, minimum size=6mm];

\begin{scope}
\node[round] (1) {$x$};
\node[square] (2) [left of = 1]{$y$};
\node[square] (3) [right of = 1]{$z$};
\draw (1) edge[->,"$a$",swap] (2);
\draw (1) edge[->,"$b$"] (3);
\end{scope}
\end{tikzpicture}
\end{center}
where the arrows may reverse depending on the signs of $a$ and $b$. We have the following observation.

\begin{lem} \label{isom}
Suppose $a,b>0$, $g=\gcd(a,b)$. Then the cluster variety 
\[
\CA=\{(x,x',y,z)\mid xx'=y^az^b+1,~~y,z\neq0\}
\]
is isomorphic to 
\[
\widetilde{\CA}=\{(u,u',v)\mid uu'=v^g+1,~~v\neq0\}\times \{w\mid w\neq0\}.
\]
More explicitly, if we choose $s,t$ such that $sa+tb=g$ and denote $a=a'g$ and $b=b'g$, then the map $\widetilde{\CA}\to \CA$ defined by 
\begin{equation} \label{change}
x=u,~~ x'=u', ~~y=v^sw^{b'},~~ z=v^tw^{-a'}
\end{equation}
is an isomorphism.
\end{lem}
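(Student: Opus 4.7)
The plan is to exhibit an explicit inverse to the proposed map and verify that both compositions are the identity; the entire content then reduces to the Bezout identity $sa+tb=g$ and the trivial identity $a b' = a' b$.

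First I would check that the map $\widetilde{\CA}\to\CA$ given by \eqref{change} is well-defined. Substituting into the defining equation of $\CA$, the exponent of $v$ in $y^a z^b = v^{sa+tb}w^{ab' - a'b}$ is $sa+tb = g$ while the exponent of $w$ is $ab' - a'b = 0$ (using $a = a'g$, $b = b'g$). Hence $y^a z^b = v^g$ and the relation $xx' = y^a z^b + 1$ becomes $uu' = v^g + 1$, which holds on $\widetilde{\CA}$. Since $v,w\in\BC^*$ and $s,t,a',b'\in\BZ$, the expressions $v^s w^{b'}$ and $v^t w^{-a'}$ are nonzero, so $y,z\in\BC^*$ as required.

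Next I would write down the candidate inverse $\CA \to \widetilde{\CA}$,
\[
u=x,\qquad u'=x',\qquad v=y^{a'}z^{b'},\qquad w=y^{t}z^{-s}.
\]
One checks well-definedness in exactly the same spirit: $v^g = (y^{a'}z^{b'})^g = y^a z^b = xx'-1 = uu' - 1$, and $v,w$ are visibly nonzero. For the compositions, the back-substitution gives
\[
y^{a'}z^{b'} = v^{sa'+tb'}w^{a'b'-a'b'} = v^{(sa+tb)/g} = v,
\qquad
y^{t}z^{-s} = v^{st-st}w^{tb' + sa'} = w,
\]
using $sa'+tb' = 1$, and conversely, plugging $v = y^{a'}z^{b'}$, $w = y^{t}z^{-s}$ into \eqref{change} recovers $y$ and $z$ after the analogous exponent bookkeeping.

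There is no real obstacle beyond the exponent arithmetic: both maps are manifestly morphisms of affine varieties (polynomials in the coordinates and their inverses, since all relevant variables are units), and the two computations above show they are mutually inverse. The only thing worth being slightly careful about is the case $g = a$ or $g = b$ (so that $a' = 1$ or $b' = 1$), and the freedom in the choice of the Bezout pair $(s,t)$, but different choices just produce isomorphisms of $\widetilde{\CA}$ with itself by rescaling $w$, so the statement is independent of that choice.
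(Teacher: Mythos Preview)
Your proof is correct and is exactly the approach taken in the paper: the paper simply writes down the inverse $u=x$, $u'=x'$, $v=y^{a'}z^{b'}$, $w=y^{t}z^{-s}$ and declares the verification straightforward, whereas you have carried out that verification in full.
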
 
\begin{proof}
It is straightforward to check that 
\[
x=u,~~x'=u',~~v=y^{a'}z^{b'},~~w=y^tz^{-s}
\]
gives the inverse.
\end{proof}

\begin{prop} \label{1m}
Let $\CA$ be the cluster variety associated with the diagram
\begin{center}
\begin{tikzpicture}
\tikzstyle{round}=[circle,thick,draw=black!75, minimum size=4mm];
\tikzstyle{square}=[rectangle, thick,draw=black!75, minimum size=6mm];

\begin{scope}
\node[round] (1) {$x$};
\node[square] (2) [left of = 1]{$y$};
\node[square] (3) [right of = 1]{$z$};
\draw (1) edge[->,"$a$",swap](2);
\draw (1) edge[->,"$b$"](3);
\end{scope}
\end{tikzpicture}
\end{center}
where $a,b$ are not simultaneously 0. Then $\CA$ is of mixed Hodge-Tate type and its mixed Hodge numbers $h^{k,(p,p)}$ are
\begin{center}
\begin{tabular}{c|cccc}
$k-p$ & $H^0$& $H^1$& $H^2$&$H^3$ \\
\hline
$0$ & $1$ &$2$&$2$&$1$\\
$1$ & $0$& $0$ & $\gcd(a,b)-1$ & $\gcd(a,b)-1$
\end{tabular}
\end{center}
Furthermore, following the convention in Section \ref{notation}, the Deligne splitting admits a basis represented by
\begin{equation} \label{12}
\begin{cases}
H^{0,(0,0)}=\langle1\rangle\\
H^{1,(1,1)}=\langle \underline{y}, \underline{z}\rangle\\
H^{2,(2,2)}=\langle a\underline{xy}+b\underline{xz},\underline{yz}\rangle\\
H^{3,(3,3)}=\langle \underline{xyz}\rangle\\
H^{2,(1,1)}= \langle y^{ia/(a,b)}z^{ib/(a,b)}(a\underline{xy}+b\underline{xz})\mid 1\le i\le (a,b)-1\rangle\\
H^{3,(2,2)}= \langle y^{ia/(a,b)}z^{ib/(a,b)}\underline{xyz}\mid 1\le i\le (a,b)-1\rangle.\\
\end{cases}
\end{equation}

\end{prop}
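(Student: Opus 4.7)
The strategy is to reduce Proposition \ref{1m} to the two-dimensional computation of Theorem \ref{2d} via Lemma \ref{isom} and the K\"unneth formula. If $a = 0$ or $b = 0$, say $b = 0$, then $\CA$ is already of the form considered in Proposition \ref{10}, and the statement follows immediately using the convention $\gcd(a, 0) = a$. We may therefore assume $a, b > 0$ in what follows.

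Under this assumption, Lemma \ref{isom} identifies $\CA$ with the product $\widetilde{\CA} = \{uu' = v^g + 1,\ v \neq 0\} \times \Spec\BC[w^{\pm 1}]$ for $g = \gcd(a,b)$. Theorem \ref{2d} supplies the Deligne splitting of $H^*(\{uu' = v^g + 1,\ v \neq 0\})$ in terms of the forms $1, \underline{v}, \underline{uv}$, and $v^i\underline{uv}$ for $1 \le i \le g-1$; combining these with $H^*(\BC^*) = \langle 1, \underline{w} \rangle$ via the K\"unneth formula yields the claimed mixed Hodge numbers together with a basis of each $H^{k,(p,p)}(\widetilde{\CA})$ in the coordinates $(u,v,w)$, for instance $H^{2,(2,2)}(\widetilde{\CA}) = \langle \underline{uv}, \underline{vw} \rangle$ and $H^{2,(1,1)}(\widetilde{\CA}) = \langle v^i\underline{uv} \mid 1 \le i \le g-1 \rangle$.

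It then remains to transport this basis back to $(x,y,z)$-coordinates through the isomorphism (\ref{change}). Writing $a = a' g$, $b = b' g$ and choosing $s, t \in \BZ$ with $sa + tb = g$, so that $sa' + tb' = 1$, the pullbacks of the basic $1$-forms are $\underline{v} = a'\underline{y} + b'\underline{z}$ and $\underline{w} = t\underline{y} - s\underline{z}$. Substituting these directly yields
\begin{align*}
\underline{uv} &= a'\underline{xy} + b'\underline{xz}, \\
\underline{vw} &= -(sa' + tb')\,\underline{yz} = -\underline{yz}, \\
\underline{uvw} &= -\underline{xyz}, \\
v^i\underline{uv} &= y^{ia'}z^{ib'}(a'\underline{xy} + b'\underline{xz}), \\
v^i\underline{uvw} &= -y^{ia'}z^{ib'}\underline{xyz},
\end{align*}
and since $a'\underline{xy} + b'\underline{xz}$ is a nonzero scalar multiple of $a\underline{xy} + b\underline{xz}$ and the exponents agree via $a/(a,b) = a'$ and $b/(a,b) = b'$, these forms generate exactly the spaces listed in (\ref{12}).

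The one point requiring any care, and the sole nontrivial step of the argument, is verifying that the substituted forms still constitute a basis at each bidegree. This reduces to the invertibility of the $2\times 2$ change-of-basis matrix between $(\underline{v}, \underline{w})$ and $(\underline{y}, \underline{z})$, whose determinant equals $-(sa' + tb') = -1$ by B\'ezout; once this is in place, the stated basis (\ref{12}) and the tabulated Hodge numbers are immediate.
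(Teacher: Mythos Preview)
Your approach is essentially the same as the paper's: use Lemma \ref{isom} to reduce to the product case of Proposition \ref{10}, then pull the basis back through the change of coordinates (\ref{change}). The computations you give for $\underline{uv}$, $\underline{vw}$, $\underline{uvw}$, $v^i\underline{uv}$, etc.\ match the paper's exactly, as does the observation that the $2\times 2$ coordinate-change matrix has determinant $-1$.

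There is, however, one gap. After disposing of the case where one of $a,b$ vanishes, you write ``We may therefore assume $a, b > 0$.'' But the hypothesis is only that $a,b$ are not simultaneously zero; as the text preceding the proposition notes, the arrows in the diagram may reverse depending on the signs of $a$ and $b$. When $ab<0$ the cluster variety $\CA$ has defining equation $xx' = y^{|a|} + z^{|b|}$ rather than $xx' = y^{|a|}z^{|b|} + 1$, and Lemma \ref{isom} does not apply to it directly. The paper closes this gap in a short final paragraph: it first carries out your computation for the auxiliary variety $\CA' = \{xx' = 1 + y^a z^b,\ y,z\neq 0\}$, and then, citing \cite[Proposition 3.1]{Z}, observes that replacing $x'$ by $x'$ times a suitable power of $y$ or $z$ gives an isomorphism $\CA \cong \CA'$ fixing $x,y,z$, so the same forms represent a basis for the Deligne splitting of $\CA$. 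Without this step your argument covers only the case $ab>0$.
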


\begin{proof}
The case where $a=0$ or $b=0$ are treated in Proposition \ref{10}. For general case, we first study the mixed Hodge structure of the variety  
\begin{equation} \label{0101}
\CA'=\{(x,x',y,z)\mid xx'=1+y^az^b,\,y,z\neq0\}.
\end{equation}
By Lemma \ref{isom}, we have 
\[
\CA'\cong\{(u,u',v)\mid uu'=v^g+1,~~v\neq0\}\times \{w\mid w\neq0\},
\]
where $g=(a,b)$. So $\CA'$ is isomorphic to the cluster variety associated with the graph
\begin{center}
\begin{tikzpicture}
\tikzstyle{round}=[circle,thick,draw=black!75, minimum size=4mm];
\tikzstyle{square}=[rectangle, thick,draw=black!75, minimum size=6mm];

\begin{scope}
\node[round] (1) {$u$};
\node[square] (2) [right of = 1]{$v$};
\node[square] (3) [right of = 2]{$w$};
\draw (1) edge[->,"$g$"] (2);
\end{scope}
\end{tikzpicture}
\end{center}

Therefore, the mixed Hodge structure of $\CA'$ and follows from Proposition \ref{10}, presented in variables $u,v,w$. So it suffices to apply the change of variables (\ref{change}) to convert into variables $x,y,z$. By (\ref{change}) we have
\begin{equation}\label{410}
\underline{v}=d\log v=d\log (y^{a'}z^{b'})=a'd\log y+b'd\log z=a'\underline{y}+b'\underline{z}
\end{equation}
and
\begin{equation} \label{411}
\underline{w}=d\log w=d\log (y^tz^{-s})=td\log y-sd\log z=t\underline{y}-s\underline{z}.
\end{equation}

Plugging equations (\ref{410}) and (\ref{411}) into (\ref{11}), we have:
\begin{enumerate}
\item $k=p=0$. We have $H^{0,(0,0)}(\CA')=\langle1\rangle$.
\item $k=p=1$: We have $\underline{v}$ and  $\underline{w}$ form a basis of $H^{1,(1,1)}(\CA')$.  Since the matrix $\left(\begin{array}{cc} a'&b'\\t&-s\end{array}\right)$ has determinant -1, equations (\ref{410}) and (\ref{411}) imply that $H^{1,(1,1)}(\CA')$  is also spanned by $\underline{y}$ and $\underline{z}$.
\item $k=p=2$: By a simple change of variables, we have
\[
\underline{uv}=\underline{x}(a'\underline{y}+b'\underline{z})=a'\underline{xy}+b'\underline{xz}
\]
and
\[
\underline{vw}=(a'\underline{y}+b'\underline{z})(t\underline{y}-s\underline{z})=-\underline{yz}.
\]
So
\[
H^{2,(2,2)}(\CA')=\langle a\underline{xy}+b\underline{xz},\underline{yz}\rangle.
\]
\item $k=p=3$: Similarly 
\[
\underline{uvw}=\underline{x}(a'\underline{y}+b'\underline{z})(t\underline{y}-s\underline{z})=-\underline{xyz},
\] 
so we have
\[
H^{3,(3,3)}(\CA')=\langle \underline{xyz}\rangle.
\]
\item $k=2,p=1$: Basis are 
\[
v^i\underline{uv}=(y^{a'}z^{b'})^i\underline{x}(a'\underline{y}+b'\underline{z})=y^{i a'}z^{i b'}(a\underline{xy}+b\underline{xz}),
\]
where $1\le i\le \gcd(a,b)-1$.
\item $k=3,p=2$: Basis are $y^{i a'}z^{i b'}\underline{xyz}$, where $1\le i\le \gcd(a,b)-1$.
\end{enumerate}
Therefore, the list of basis of Deligne splitting (\ref{12}) holds for $\CA'$.

\medskip

Back to the calculation of cluster varieties. Recall that $\CA$ is defined by 
\[
xx'=1+y^{|a|}z^{|b|}, ~~y,z\ne 0 ~~ \textup{if }ab>0
\]
or
\[
xx'=y^{|a|}+z^{|b|}, ~~y,z\ne 0,~~ \textup{if } ab<0.
\]
Following the proof of \cite[Proposition 3.1]{Z}, after replacing $x'$ by a suitable power of $y$ or $z$, $\CA$ is always isomorphic to $\CA'$. Since in this isomorphism the variables $x,y,z$ are fixed, so the list of basis of Deligne splittings of $\CA'$ also holds for $\CA$.
\end{proof}

\subsection{2 mutable vertices}
In this section, we study the mixed Hodge structures of cluster varieties associated with a graph with 2 mutable vertices and 1 frozen variable. The extended exchange matrices are of the form
\[
\left(
\begin{array}{cc}
0&-a\\
a&0\\
b&c\\
\end{array}
\right).
\]
So the corresponding directed graph is of the form

\begin{center}
\begin{tikzpicture}
\tikzstyle{round}=[circle,thick,draw=black!75, minimum size=4mm];
\tikzstyle{square}=[rectangle, thick,draw=black!75, minimum size=6mm];
\begin{scope}
\node[round] (1) {$x$};
\node[round](2)[right of =1]{$y$};
\node[square](3)[below of =1]{$z$};
\draw (1) edge[->,"$a$"] (2);
\draw (1) edge[->,"$b$",swap] (3);
\draw (2) edge[->,"$c$"] (3);
\end{scope}    
\end{tikzpicture}
\end{center}
where the arrow $x\to z$ and $y\to z$ may possibly disappear or reverse depending on whether $b$ and $c$ are zero or negative. When $a=0$, the cluster variety is possibly singular, and will be discussed in Section 3. By symmetry between the variables $x$ and $y$, we may assume $a>0$ in this section.

\begin{prop}\label{2m}
Let $\CA$ be the cluster variety associated with the graph
\begin{center}
\begin{tikzpicture}
\tikzstyle{round}=[circle,thick,draw=black!75, minimum size=4mm];
\tikzstyle{square}=[rectangle, thick,draw=black!75, minimum size=6mm];
\begin{scope}
\node[round] (1) {$x$};
\node[round](2)[right of =1]{$y$};
\node[square](3)[below of =1]{$z$};
\draw (1) edge[->,"$a$"] (2);
\draw (1) edge[->,"$b$",swap] (3);
\draw (2) edge[->,"$c$"] (3);
\end{scope}    
\end{tikzpicture}
\end{center}
where integers $a>0$. Then $\CA$ is of mixed Hodge-Tate type and its mixed Hodge numbers $h^{k,(p,p)}$ are
\begin{center}
\begin{tabular}{c|cccc}
$k-p$ & $H^0$& $H^1$& $H^2$&$H^3$ \\
\hline
$0$ & $1$ &$1$&$1$&$1$\\
$1$& $0$ &$0$&$C$& $C$ 
\end{tabular}
\end{center}
where $C=\gcd(a,b)+\gcd(a,c)-2$. Furthermore, following the conventions in Section \ref{notation}, the Deligne splitting $H^{k,(p,p)}$ admits a basis represented by
\begin{equation} \label{21}
\begin{cases}
H^{0,(0,0)}=&\langle1\rangle\\
H^{1,(1,1)}=&\langle \underline{z}\rangle\\
H^{2,(2,2)}=&\langle a\underline{xy}+b\underline{xz}+c\underline{yz}\rangle\\
H^{3,(3,3)}=&\langle \underline{xyz}\rangle\\
H^{2,(1,1)}=&\langle x^{\frac{ia}{(a,c)}}z^{\frac{ic}{(a,c)}}(a\underline{xy}+c\underline{yz}),y^{\frac{jb}{(b,c)}}z^{\frac{jc}{(b,c)}}(a\underline{xy}+b\underline{xz})\\
&\mid 1\le i\le (a,c)-1,1\le j\le(b,c)-1 \rangle\\
H^{3,(2,2)}=& \langle x^{\frac{ia}{(a,c)}}z^{\frac{ic}{(a,c)}}\underline{xyz},y^{\frac{jb}{(b,c)}}z^{\frac{jc}{(b,c)}}\underline{xyz}\\
&\mid 1\le i\le (a,c)-1,1\le j\le (b,c)-1\rangle.\\
\end{cases}
\end{equation}
\end{prop}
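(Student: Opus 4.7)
The plan is to apply the Louise algorithm at the unique edge $x\to y$ of the mutable subgraph, which is trivially a separating edge, giving the open cover $\CA = \CA_{\{x\}}\cup\CA_{\{y\}}$ with intersection $\CA_{\{x,y\}}$, and then combine Proposition~\ref{1m} with the Mayer--Vietoris sequence of Lemma~\ref{MVH}. Each of the three pieces is either covered by Proposition~\ref{1m} or is a torus: freezing $x$ leaves $y$ as the sole mutable variable (with mutation relation $yy'=x^a+z^c$ up to signs of $b,c$), yielding a Proposition~\ref{1m} cluster variety with relevant invariant $\gcd(a,c)$; freezing $y$ yields one with invariant $\gcd(a,b)$ and relation $xx'=y^az^b+1$; and freezing both yields $\CA_{\{x,y\}}\cong(\BC^*)^3$, whose cohomology is the exterior algebra on $\underline{x},\underline{y},\underline{z}$. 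Since all three pieces are of mixed Tate type, Lemma~\ref{MVH} implies $\CA$ is as well, and reduces the problem to computing $\ker f^{k,(p,p)}$ and $\textup{coker}\, f^{k-1,(p,p)}$ for the Mayer--Vietoris restriction map $f^{k,(p,p)}$ in each bidegree.

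The impure-weight bidegrees $(2,(1,1))$ and $(3,(2,2))$ are immediate: since the torus $\CA_{\{x,y\}}$ carries no classes of impure type, $f^{2,(1,1)}$ and $f^{3,(2,2)}$ have trivial target, their kernels exhaust the sources, and the stated dimension $C=(\gcd(a,b)-1)+(\gcd(a,c)-1)$ follows directly from Proposition~\ref{1m}. For the pure-weight bidegrees $(k,(k,k))$, an elementary linear-algebra calculation using the log bases from Proposition~\ref{1m} and the standard torus basis $\underline{x},\underline{y},\underline{z},\underline{xy},\underline{xz},\underline{yz},\underline{xyz}$ shows that each $f^{k,(k,k)}$ is surjective with one-dimensional kernel, producing the entries $1,1,1,1$ in the table. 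In particular, in bidegree $(2,(2,2))$ the kernel is spanned by the matched pair whose common torus restriction is $\omega:=a\underline{xy}+b\underline{xz}+c\underline{yz}$, since $\omega=(a\underline{xy}+c\underline{yz})+b\underline{xz}$ lies in $H^{2,(2,2)}(\CA_{\{x\}})=\langle a\underline{xy}+c\underline{yz},\underline{xz}\rangle$, while $\omega=(a\underline{xy}+b\underline{xz})+c\underline{yz}$ lies in $H^{2,(2,2)}(\CA_{\{y\}})=\langle a\underline{xy}+b\underline{xz},\underline{yz}\rangle$.

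For the explicit log-form representatives in display~\eqref{21} to realise classes on $\CA$ in the sense of the Convention of Section~\ref{notation}, I would adapt the extension argument used in \cite[Section 7.1]{LS} and in the proof of Proposition~\ref{1m}. On $\CA_{\{x\}}$, the mutation relation $yy'=x^a+z^c$ gives $\underline{y}=d\log(x^a+z^c)-\underline{y'}$ on $\{y'\ne 0\}$, yielding an alternative expression for each impure generator that is regular there, while the original expression is regular on $\{y\ne 0\}$; since $\{y=y'=0\}\subset\CA_{\{x\}}$ has codimension two, Hartogs' theorem produces the regular global extension on $\CA_{\{x\}}$. On $\CA_{\{y\}}$ the same forms are directly regular since $y$ is invertible. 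The pure-weight representative $\omega$ extends by the same argument applied on both pieces.

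The main obstacle will be this regularity-extension step. The Mayer--Vietoris dimension count is routine given Proposition~\ref{1m}, but producing the specific log-form representatives on the cluster torus that admit regular extensions to $\CA$ requires the Hartogs argument on each open piece together with careful sign bookkeeping when $b$ or $c$ is negative or zero, so that Proposition~\ref{1m} is applied in the correct sign convention on each of $\CA_{\{x\}}$ and $\CA_{\{y\}}$.
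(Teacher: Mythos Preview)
Your proposal is correct and follows essentially the same route as the paper: apply the Louise algorithm at the separating edge $x\to y$, use Proposition~\ref{1m} for $\CA_{\{x\}}$ and $\CA_{\{y\}}$ and the torus description for $\CA_{\{x,y\}}$, and run the Mayer--Vietoris computation of Lemma~\ref{MVH} bidegree by bidegree, with the key $H^{2,(2,2)}$ kernel element identified exactly as you do. Your added Hartogs-type extension discussion is more explicit than what the paper writes (it relies silently on the Convention of Section~\ref{notation}), but otherwise the arguments coincide.
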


\begin{proof}
Since $a\neq0$, the edge $x\rightarrow y$ is a separating edge and we may apply Mayer-Vietoris argument to the covering produced by the Louise algorithm. We have $\CA=U\cup V$, where $U$, $V$ and $U\cap V$ be the cluster varieties associated with the graphs
\begin{center}
\begin{tikzpicture}
\tikzstyle{round}=[circle,thick,draw=black!75, minimum size=4mm];
\tikzstyle{square}=[rectangle, thick,draw=black!75, minimum size=6mm];
\begin{scope}
\node[square] (1) {$x$};
\node[round](2)[right of =1]{$y$};
\node[square](3)[below of =1]{$z$};
\draw (1) edge[->,"$a$"] (2);
\draw (2) edge[->,"$c$"] (3);

\node[round] at (4,0) (1) {$x$};
\node[square](2)[right of =1]{$y$};
\node[square](3)[below of =1]{$z$};
\draw (1) edge[->,"$a$"] (2);
\draw (1) edge[->,"$b$",swap] (3);

\node[square] at (8,0)  (1) {$x$};
\node[square](2)[right of =1]{$y$};
\node[square](3)[below of =1]{$z$};
\end{scope}    
\end{tikzpicture}
\end{center}

To calculate the mixed Hodge structure, it suffices to find the kernel and the cokernel of 
\begin{eqnarray*}
f^{k,(p,p)}&:H^{k,(p,p)}(U)\oplus H^{k,(p,p)}(V)&\to H^{k,(p,p)}(U\cap V)\\
                  &(\alpha,\beta)&\mapsto \alpha|_{U\cap V}-\beta|_{U\cap V}.
\end{eqnarray*}
and apply Lemma \ref{MVH}. Note that the Deligne splittings of the cohomology groups of $U$ are $V$ are computed in Proposition \ref{1m}.
\begin{enumerate}
\item $k=p=0$. The cohomology groups $H^{0,(0,0)}(U),H^{0,(0,0)}(V)$ and $H^{0,(0,0)}(U\cap V) $ are 1 dimensional and generated by 1. So $\ker f^{0,(0,0)}=\langle1\rangle$ and $\textup{coker} f^{0,(0,0)}=0$.
\item $k=p=1$. We have 
\[
\begin{cases}
H^{1,(1,1)}(U)=\langle \underline{x},\underline{z}\rangle\\
H^{1,(1,1)}(V)=\langle\underline{y},\underline{z}\rangle\\
H^{1,(1,1)}(U\cap V)=\langle\underline{x},\underline{y},\underline{z}\rangle
\end{cases}
\]
So $\textup{coker} f^{1,(1,1)}=0$ and $\ker f^{1,(1,1)}=\langle  \underline{z}\rangle$.
\item $k=p=2$. We have
\[
\begin{cases}
H^{2,(2,2)}(U)=\langle -a\underline{yx}+c\underline{yz},\underline{xz}\rangle=\langle a\underline{xy}+c\underline{yz},\underline{xz}\rangle\\
H^{2,(2,2)}(V)=\langle a\underline{xy}+b\underline{xz},\underline{yz}\rangle\\
H^{2,(2,2)}(U\cap V)=\langle\underline{xy},\underline{xz},\underline{yz}\rangle
\end{cases}
\]
So $f^{2,(2,2)}$ is a linear map from a 4-dimensional vector space to a 3-dimensional vector space. A simple linear algebra calculation shows that $f^{2,(2,2)}$ surjective and its kernel (viewed as a subspace of $H^2(X)$) is 1-dimensional, and is represented by the extension of $a\underline{xy}+b\underline{xz}+c\underline{yz}$.

\item $k=p=3$. Since the cohomology groups of $H^{3,(3,3)}(U)$ $H^{3,(3,3)}(V)$ $H^{3,(3,3)}(U\cap V)$ are 1 dimensional and generated by $\underline{xyz}$, we have $f^{3,(3,3)}$ is surjective and $\ker f^{3,(3,3)}=\langle\underline{xyz}\rangle$.

\item $k=2,p=1.$ Since $H^{2,(1,1)}(U\cap V)=0$, we have $\textup{coker}\, f^{2,(1,1)}=0$ and  
\[
\begin{split}
\ker f^{2,(1,1)}=H^{2,(1,1)}(U)\oplus H^{2,(1,1)}(V)\\
=\langle x^{i a/\gcd(a,c)}z^{i c/\gcd(a,c)} (a\underline{xy}+c\underline{yz}),\\
y^{j b/\gcd(b,c)}z^{j c/\gcd(b,c)} (a\underline{xy}+b\underline{xz})\rangle,
\end{split}
\]
where $1\le i\le \gcd(a,c)-1$ and $1\le j\le \gcd(b,c)-1$.
\item $k=3,p=2$. Similarly, $H^{3,(2,2)}(U\cap V)=0$, and hence $\textup{coker}\, f^{3,(2,2)}=0$ and 
\[
\begin{split}
\ker f^{3,(2,2)}=\langle x^{ia/\gcd(a,c)}z^{ic/\gcd(a,c)}\underline{xyz},\\
y^{ja/\gcd(a,b)}z^{jb/\gcd(a,b)}\underline{xyz}\rangle,
\end{split}
\]
where $1\le i\le \gcd(a,c)-1$ and $1\le j\le \gcd(b,c)-1$.
\end{enumerate}

Therefore, $f^{k,(p,p)}$ are surjective and $H^{k,(p,p)}(X)=\ker f^{k,(p,p)}$ for all $k,p$. We conclude that the mixed Hodge numbers are
\begin{center}
\begin{tabular}{c|cccc}
$k-p$ & $H^0$& $H^1$& $H^2$&$H^3$ \\
\hline
0 & 1 &1&1&1\\
1& 0 &0&$C$& $C$ 
\end{tabular}
\end{center}
where $C=\gcd(a,b)+\gcd(a,c)-2$.
\end{proof}

\subsection{3 mutable vertices}
Let $\CA$ be a cluster variety which is associated with a graph with 3 mutable variables. We first show that $\CA$ is of Louise type.

\begin{prop} \label{finite}
   Suppose that $A$ is a cluster variety defined by a $3\times3$ matrix which does not satisfy the Louise property, then $A$ is not of finite type. In particular, $\CA=\Spec A$ is not a 3-dimensional variety.
\end{prop}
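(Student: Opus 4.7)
The plan is to show, in two steps, that the $3\times 3$ exchange matrices $\widetilde{B}$ whose cluster algebra fails the Louise property are precisely those whose quiver is a mutation-cyclic 3-cycle, and then to prove that any such cluster algebra fails to be finitely generated as a $\BC$-algebra.

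First I would reduce to the mutation-cyclic case. Since $\widetilde{B}$ is $3\times 3$ there are three mutable vertices and no frozen variables. As recalled in Section \ref{2}, acyclic quivers always define cluster algebras of Louise type, so the hypothesis forces $\Gamma(\widetilde{B})$ to contain a directed cycle; on three vertices the only possibility is a 3-cycle with some positive integer weights $(a,b,c)$. A direct application of the mutation rule \eqref{mutation1} at vertex $2$ shows that the resulting quiver is acyclic when $c\ge ab$ and is otherwise a new 3-cycle with weights $(ab-c,b,a)$; analogous statements hold at vertices $1$ and $3$. If some iterated mutation of $\widetilde{B}$ produces an acyclic seed, every edge in that seed is separating (a finite acyclic graph has no bi-infinite paths), and freezing either endpoint lands in the $2$-mutable setting, which is Louise by Proposition \ref{2m} or by the trivial ``no mutable edges'' clause of the definition. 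This would force $A$ to be Louise, contradicting the hypothesis. Hence every seed in the mutation class of $\widetilde{B}$ is a 3-cycle; equivalently the weight triple $(a,b,c)$ is mutation-cyclic.

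Second, I would show that the cluster algebra $A$ attached to a mutation-cyclic 3-cycle is not finitely generated over $\BC$. The plan is to iterate a mutation sequence such as $\mu_1\mu_2\mu_3\mu_1\cdots$ to generate an infinite family of cluster variables $v_n$, and to track their $d$-vectors $d_n\in\BN^3$, i.e.\ the exponent vectors of the Laurent denominators of $v_n$ with respect to the initial cluster $(x_1,x_2,x_3)$. The exchange relations impose a Markov-type recursion on the $d_n$ that forces unbounded growth in each coordinate and prevents the $d_n$ from eventually lying in any finitely generated submonoid of $\BN^3$. On the other hand, if $A$ were generated as a $\BC$-algebra by Laurent polynomials $f_1,\dots,f_N$, every element of $A$ would have its $d$-vector in the finitely generated submonoid $\BN d(f_1)+\cdots+\BN d(f_N)\subset\BN^3$. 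The escape of $\{d_n\}$ from every such submonoid yields the desired contradiction, so $A$ is not finitely generated; in particular $\Spec A$ is not an affine $\BC$-scheme of finite type, and therefore not a $3$-dimensional variety.

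The main obstacle is the last step: proving rigorously that the $d$-vectors of cluster variables in a mutation-cyclic rank-$3$ cluster algebra escape every finitely generated submonoid of $\BN^3$ for an arbitrary triple $(a,b,c)$. The cleanest direct route is a mutation-combinatorial analysis of the $d$-vector recursion, generalizing the Markov tree argument that handles the prototypical $(2,2,2)$ quiver. An alternative is to combine the classification of mutation-cyclic rank-$3$ quivers due to Beineke--Br\"ustle--Hille with non-finite-generation statements available from the upper cluster algebra framework of Berenstein--Fomin--Zelevinsky, or from the scattering-diagram description of Gross--Hacking--Keel--Kontsevich.
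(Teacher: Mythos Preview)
Your reduction to the mutation-cyclic case is correct and matches the paper's first step. However, your second step diverges from the paper and contains a genuine gap.

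The paper's argument is far simpler and avoids $d$-vectors entirely. From mutation-cyclicity it extracts the inequalities $ab>c$, $bc>a$, $ca>b$, which force $a,b,c\ge 2$, and this persists in every seed. Hence every exchange relation $x_kx_k'=M_++M_-$ has both monomials $M_\pm$ of degree $\ge 2$ in the cluster variables of that seed, so contains no constant or linear term. Presenting $A$ as the quotient of the polynomial ring on the (infinitely many) cluster variables by the ideal $I$ of exchange relations, one gets $I\subset\Fm^2$ where $\Fm$ is the ideal generated by all cluster variables; thus $A$ surjects as a $\BC$-algebra onto a ring whose cotangent space $\Fm/\Fm^2$ is infinite-dimensional. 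No finitely generated $\BC$-algebra admits such a quotient, so $A$ is not finitely generated.

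Your $d$-vector argument, by contrast, does not work as written. The assertion that the denominator vectors $d_n$ eventually escape \emph{every} finitely generated submonoid of $\BN^3$ is vacuously false: $\BN^3$ itself is generated by $e_1,e_2,e_3$, and every $d_n$ lies in it. More fundamentally, if $A=\BC[f_1,\dots,f_N]$, an arbitrary element $P(f_1,\dots,f_N)$ has $d$-vector bounded \emph{componentwise} by some $\sum a_i\,d(f_i)$, not necessarily lying in the monoid $\sum_i\BN\,d(f_i)$; and ``componentwise bounded by an element of $M$'' is no constraint once $M$ contains vectors positive in every coordinate. To salvage a denominator-style proof you would need an extra structural input (e.g.\ irreducibility of cluster variables in $A$, or a ray/slope constraint on the $d_n$), which you acknowledge as the ``main obstacle'' but do not provide. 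The paper's elementary degree argument sidesteps this entirely.
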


\begin{proof}
    We first note that for a graph with 3 vertices if the Louise property for $A$ fails, then for any seed, the corresponding graph is an oriented cycle. If not, there is an separating edge, and freezing either or both vertices of it obtains an oriented graph with at most 2 mutable vertices, which are obvious of Louise type. Now suppose the graph of of the form
    
\begin{center}
\begin{tikzpicture}
\tikzstyle{round}=[circle,thick,draw=black!75, minimum size=4mm];
\tikzstyle{square}=[rectangle, thick,draw=black!75, minimum size=6mm];
\begin{scope}
\node[round] (1) {$x$};
\node[round](2)[right of =1]{$y$};
\node[round](3)[below of =1]{$z$};
\draw (1) edge[->,"$a$"] (2);
\draw (3) edge[->,"$b$"] (1);
\draw (2) edge[->,"$c$"] (3);
\end{scope}    
\end{tikzpicture}
\end{center}

By hypothesis of non-Louise type, the mutations at $x,y,z$ should still induce oriented cycles. By formula (\ref{mutation1}),(\ref{mutation2}), we have $ab>c,bc>a,ca>b$. In particular, we have $a,b,c\ge2$.  So the mutation relations, by definition of the form $xx'=y^a+z^b$,  do not have constant and linear terms. This argument holds for any seed, so during the mutation, the graph is always a oriented triangle with all edges having weight at least 2, and mutation relations do not have constatn and linear terms in terms of the cluster variables involved. By definition, the corresponding cluster algebra is the quotient of the polynomial ring with all cluster variables appear in the mutation process by the ideal of the mutation relations, we conclude that the degree 1 part is infinite dimensional $\BC$-vector spaces, and hence $A$ is not of finite type.
\end{proof}

Now we may assume the Louise property, \emph{i.e.} after suitable mutations, the graph is not an oriented cycle. In 3 vertices situation, there is only one possibility:

\begin{center}
\begin{tikzpicture}
\tikzstyle{round}=[circle,thick,draw=black!75, minimum size=4mm];
\tikzstyle{square}=[rectangle, thick,draw=black!75, minimum size=6mm];
\begin{scope}
\node[round] (1) {$x$};
\node[round](2)[right of =1]{$y$};
\node[round](3)[below of =1]{$z$};
\draw (1) edge[->,"$a$"] (2);
\draw (1) edge[->,"$b$",swap] (3);
\draw (2) edge[->,"$c$"] (3);
\end{scope}    
\end{tikzpicture}
\end{center}
or equivalently, the corresponding extended exchange matrix is of the form
\[
\left(
\begin{array}{ccc}
0&-a & -b\\
a&0 & -c\\
b&c & 0\\
\end{array}
\right)
\]
where $a,b,c\ge0$. When exactly one of $a,b,c$ equals 0, the cluster variety is singular. When at least two of $a,b,c$ equals zero, there exists isolated vertex which has no edges, which is isomorphic to case of Proposition \ref{2m} when $b=c=0$. Therefore, we assume $a,b,c>0$ in this section. We remark that the extended exchange matrix is not of full rank whatever $a,b,c$ are.

\begin{prop} \label{3m}
Let $\CA$ be the cluster variety associated with 
\begin{center}
\begin{tikzpicture}
\tikzstyle{round}=[circle,thick,draw=black!75, minimum size=4mm];
\tikzstyle{square}=[rectangle, thick,draw=black!75, minimum size=6mm];
\begin{scope}
\node[round] (1) {$x$};
\node[round](2)[right of =1]{$y$};
\node[round](3)[below of =1]{$z$};
\draw (1) edge[->,"$a$"] (2);
\draw (1) edge[->,"$b$",swap] (3);
\draw (2) edge[->,"$c$"] (3);
\end{scope}    
\end{tikzpicture}
\end{center}
where $a,b,c$ are positive integers. Then the mixed Hodge structure of $\CA$ is of mixed Tate type and the mixed Hodge numbers $h^{k,(p,p)}(\CA)$ are
\begin{center}
\begin{tabular}{c|cccc}
$k-p$ & $H^0$& $H^1$& $H^2$&$H^3$ \\
\hline
$0$ & $1$ & $0$ & $1$ & $1$\\
$1$ & $0$ &$0$& $C$& $C+1$ \\
\end{tabular}
\end{center}
where $C= \gcd(a, b) +\gcd(a,c) +\gcd(b,c) - 3$. 
\end{prop}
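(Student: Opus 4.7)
The plan is to apply Mayer--Vietoris to a separating edge in the Louise algorithm, just as in the proof of Proposition~\ref{2m}. Since the triangle is acyclic, the edge $x\to y$ is separating; setting $U=\CA_{\{x\}}$ and $V=\CA_{\{y\}}$, one has $\CA=U\cup V$ and $U\cap V=\CA_{\{x,y\}}$. After freezing $x$, the graph of $U$ has mutable vertices $y,z$ joined by the edge $y\to z$ of weight $c$, together with frozen edges $x\to y$ and $x\to z$ of weights $a$ and $b$; this is of the form treated in Proposition~\ref{2m} (after relabeling $x_{\mathrm{new}}=y,\,y_{\mathrm{new}}=z,\,z_{\mathrm{new}}=x$ and reversing the two frozen edges). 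By symmetry, $V$ is also of Proposition~\ref{2m} type. The intersection $U\cap V$ has only $z$ mutable with two frozen-to-mutable edges of weights $b,c$, and hence is of Proposition~\ref{1m} type. All three pieces are of mixed Tate type, so Lemma~\ref{MVH} yields
\[
H^{k,(p,p)}(\CA) = \ker f^{k,(p,p)} \oplus \operatorname{coker} f^{k-1,(p,p)},
\]
reducing the problem to analyzing the Mayer--Vietoris maps $f^{k,(p,p)}$.

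The pure pieces $p=k$ are handled directly. The maps $f^{0,(0,0)}$ and $f^{3,(3,3)}$ are each $(\alpha,\beta)\mapsto\alpha-\beta$ on one-dimensional spaces, with one-dimensional kernels. The map $f^{1,(1,1)}\colon \langle\underline{x}\rangle\oplus\langle\underline{y}\rangle\to\langle\underline{x},\underline{y}\rangle$ is an isomorphism. The key observation is for $f^{2,(2,2)}$: after substitution, Proposition~\ref{2m} shows that $H^{2,(2,2)}(U)$ and $H^{2,(2,2)}(V)$ are each spanned by the same class $a\underline{xy}+b\underline{xz}+c\underline{yz}$, whereas $H^{2,(2,2)}(U\cap V)$ is two-dimensional by Proposition~\ref{1m}. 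Hence $f^{2,(2,2)}$ has rank $1$, producing $h^{2,(2,2)}(\CA)=1$ and a one-dimensional cokernel which, pushed forward by the connecting homomorphism $\delta\colon H^2(U\cap V)\to H^3(\CA)$, contributes the extra ``$+1$'' in $h^{3,(2,2)}(\CA)=C+1$.

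For the weight-shifted pieces $(k,p)=(2,1)$ and $(3,2)$, Propositions~\ref{2m} and~\ref{1m} give source dimension $\gcd(a,b)+\gcd(a,c)+2\gcd(b,c)-4$ and target dimension $\gcd(b,c)-1$. Once $f^{k,(p,p)}$ is shown to be surjective, rank--nullity yields $\dim\ker f^{k,(p,p)}=C=\gcd(a,b)+\gcd(a,c)+\gcd(b,c)-3$, giving $h^{2,(1,1)}(\CA)=C$ and $h^{3,(2,2)}(\CA)=C+1$, as asserted. To check surjectivity, one exhibits explicit preimages: rewriting the first family of basis classes of Proposition~\ref{2m} for $H^{k,(p,p)}(U)$ in the triangle variables yields classes of the form $y^{ic/\gcd(b,c)}\,x^{-ib/\gcd(b,c)}(c\underline{yz}+b\underline{xz})$ for $1\le i\le\gcd(b,c)-1$, and restricting to $U\cap V$ these match the basis of $H^{k,(p,p)}(U\cap V)$ from Proposition~\ref{1m} up to nonzero rescaling. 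The main obstacle is this restriction bookkeeping: one must carefully track the relabelings and sign reversals needed to invoke Proposition~\ref{2m} for $U$ and $V$, rewrite the given bases back in terms of $x,y,z$, and verify they restrict correctly. Once these explicit formulas are in place, the surjectivity is immediate and the mixed Hodge table of $\CA$ follows, with the failure of numerical curious hard Lefschetz being an asymmetry between $h^{2,(1,1)}=C$ and $h^{3,(2,2)}=C+1$.
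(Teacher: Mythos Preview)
Your proposal is correct and follows essentially the same approach as the paper: both apply the Louise algorithm and Mayer--Vietoris to reduce to Propositions~\ref{1m} and~\ref{2m}, then compute $\ker$ and $\operatorname{coker}$ of each $f^{k,(p,p)}$, with the crucial rank-1 observation at $f^{2,(2,2)}$ producing the extra ``$+1$'' in $h^{3,(2,2)}$. The only difference is cosmetic: the paper freezes along the separating edge $y\to z$ (giving $U\cap V$ with mutable $x$ and $\dim H^{2,(1,1)}(U\cap V)=\gcd(a,b)-1$), whereas you freeze along $x\to y$ (giving $U\cap V$ with mutable $z$ and $\dim H^{2,(1,1)}(U\cap V)=\gcd(b,c)-1$); by the symmetry of the acyclic triangle the computations are identical up to relabeling.
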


\begin{proof}
We apply the Mayer-Vietoris argument to the covering induced by the Louise algorithm of the separating edge $y\rightarrow z$. Then we have $\CA=U\cup V$, where $U$, $V$ and $U\cap V$ are the cluster varieties associated with the graphs

\begin{center}
\begin{tikzpicture}
\tikzstyle{round}=[circle,thick,draw=black!75, minimum size=4mm];
\tikzstyle{square}=[rectangle, thick,draw=black!75, minimum size=6mm];
\begin{scope}
\node[round] (1) {$x$};
\node[round](2)[right of =1]{$y$};
\node[square](3)[below of =1]{$z$};
\draw (1) edge[->,"$a$"] (2);
\draw (1) edge[->,"$b$",swap] (3);
\draw (2) edge[->,"$c$"] (3);

\node[round] at (4,0) (1) {$x$};
\node[square](2)[right of =1]{$y$};
\node[round](3)[below of =1]{$z$};
\draw (1) edge[->,"$a$"] (2);
\draw (1) edge[->,"$b$",swap] (3);
\draw (2) edge[->,"$c$"] (3);

\node[round] at (8,0) (1) {$x$};
\node[square](2)[right of =1]{$y$};
\node[square](3)[below of =1]{$z$};
\draw (1) edge[->,"$a$"] (2);
\draw (1) edge[->,"$b$",swap] (3);
\end{scope}    
\end{tikzpicture}
\end{center}

It suffices to find the kernel and the cokernel of 
\[
f^{k,(p,p)}:H^{k,(p,p)}(U)\oplus H^{k,(p,p)}(V)\to H^{k,(p,p)}(U\cap V)
\]
for all $k,p$ and apply Lemma \ref{MVH}. Note that the mixed Hodge structures of $U$, $V$ and $U\cap V$ are computed in Proposition \ref{1m} and Proposition \ref{2m}. We will always follow the convention in Section \ref{notation}.
\begin{enumerate}
\item $k=p=0$.  $\ker f^{0,(0,0)}=\langle1\rangle$ and $\textup{coker}\, f^{0,(0,0)}=0$.
\item $k=p=1$. We have
\[
\begin{cases}
H^{1,(1,1)}(U)=\langle \underline{z}\rangle & \textup{Proposition \ref{2m}}\\
H^{1,(1,1)}(V)=\langle \underline{y}\rangle & \textup{Proposition \ref{2m}}\\
H^{1,(1,1)}(U\cap V)=\langle \underline{y},\underline{z}\rangle & \textup{Proposition \ref{1m}}
\end{cases}
\]
So $\ker f^{1,(1,1)}= 0$ and $\textup{coker}\, f^{1,(1,1)}=0$.

\item $k=p=2$. We have
\[
\begin{cases}
H^{2,(2,2)}(U)=\langle a\underline{xy}+b\underline{xz}+c\underline{yz}\rangle & \textup{Proposition \ref{2m}}\\
H^{2,(2,2)}(V)=\langle b\underline{xz}-c\underline{zy}+a\underline{xy}\rangle=\langle a\underline{xy}+b\underline{xz}+c\underline{yz}\rangle & \textup{Proposition \ref{2m}}\\
H^{2,(2,2)}(U\cap V)=\langle  a\underline{xy} + b\underline{xz},\underline{yz}\rangle & \textup{Proposition \ref{1m}}
\end{cases}
\]
So $f^{2,(2,2)}$ has 1 dimensional image and hence $\dim \ker f^{2,(2,2)}=1$ and $\dim \textup{coker} \, f^{2,(2,2)}=1$.

\item $k=p=3$. $\ker f^{3,(3,3)}=\langle\underline{xyz}\rangle$ and $\textup{coker }f^{3,(3,3)}=0$.
\item $k=2,p=1.$ By Proposition \ref{1m} and Proposition \ref{2m}, we have
\[
\begin{cases}
H^{2,(1,1)}(U\cap V)& = \langle y^{\frac{i a}{(a, b)}}z^{\frac{ib}{(a, b)}}(a\underline{xy}+b\underline{xz}) \mid 1\le i\le (a,b)-1\rangle\\
H^{2,(1,1)}(U)=&\langle x^{\frac{i a}{(a,c)}}z^{\frac{ic}{(a,c)}}(a\underline{xy}+c\underline{yz}),y^{\frac{ja}{(a,b)}}z^{\frac{jb}{(a,b)}}(a\underline{xy}+b\underline{xz}),\\
&\mid 1\le i\le (a,c)-1,1\le j\le (b,c)-1\rangle
\end{cases}
\]
So the restriction map $H^{2,(1,1)}(U)\to H^{2,(1,1)}(U\cap V)$ is surjective. Therefore, $\textup{coker}\, f^{2,(1,1)}=0$ and 
\[
\begin{split}
\dim \ker f^{2,(1,1)}=&\dim H^{2,(1,1)}(U)+\dim H^{2,(1,1)}(V)-\dim H^{2,(1,1)}(U\cap V)\\
=&(a,b)+(b,c)+(c,a)-3.
\end{split}
\]

\item Similarly, $H^{3,(2,2)}(U)\to H^{3,(2,2)}(U\cap V)$ is surjective. We have $\textup{coker}\, f^{3,(2,2)}=0$ and $\dim \ker f^{2,(1,1)}=(a,b)+(b,c)+(c,a)-3$.
\end{enumerate}
In summary, the Hodge numbers are
\begin{center}
\begin{tabular}{c|cccc}
$k-p$ & $H^0$& $H^1$& $H^2$&$H^3$ \\
\hline
0 & 1 &0&1&1\\
1& 0 &0&$C$& $C+1$ 
\end{tabular}
\end{center}
where $C = (a,b)+(b,c)+(c,a)-3$.
\end{proof}

\begin{rmk}
\begin{enumerate}
\item We see from the proof that when $a,b,c$ are non-zero, the cluster variety $X$ is still smooth since it is covered by full rank cluster varieties $U$ and $V$. Even in this case, the mixed Hodge numbers fail to satisfy any numerical hard Lefschetz symmetry.
\item Most of the classes in the Deligne splitting can be represented explicitly by algebraic differential forms as Proposition \ref{1m} and Proposition \ref{2m}. The only exception is the class contributed by $\textup{coker}\, f^{2,(2,2)}$, which involves the boundary map in the Mayer-Vietoris sequence.
\end{enumerate}
\end{rmk}

\section{Towards singular cluster varieties}
There are no known systematic approaches to study the cohomology groups of singular rank cluster varieties, even for isolated cluster varieties. Based on analysis in Section \ref{smooth}, there are two types of non-full rank cluster varieties in dimension 3: 

\begin{center}
\begin{tikzpicture}
\tikzstyle{round}=[circle,thick,draw=black!75, minimum size=4mm];
\tikzstyle{square}=[rectangle, thick,draw=black!75, minimum size=6mm];
\begin{scope}
\node[square] (1) {$z$};
\node[round] (2) [left of = 1]{$x$};
\node[round] (3) [right of = 1]{$y$};
\draw (2) edge[->,"$a$"] (1);
\draw (3) edge[->,"$b$",swap] (1);
\node at (0,-0.5) {Case 1};
\node[round] (1)  at (5,0) {$z$};
\node[round] (2) [left of = 1]{$x$};
\node[round] (3) [right of = 1]{$y$};
\draw (2) edge[->,"$a$"] (1);
\draw (3) edge[->,"$b$",swap] (1);
\node at (5,-0.5) {Case 2};
\end{scope}
\end{tikzpicture}
\end{center}

By applying Mayer-Vietoris argument, Case 2 reduces to Case 1 immediately. In this section, we apply classical mixed Hodge theoretic methods, such as compactifications and resolutions of singularities, to study the mixed Hodge structures on the cohomology and intersection cohomology of the cluster varieties $\CA$ of Case 1 when $a=b=1$, \emph{i.e.} defined by the graph
\begin{center}
\begin{tikzpicture}
\tikzstyle{round}=[circle,thick,draw=black!75, minimum size=4mm];
\tikzstyle{square}=[rectangle, thick,draw=black!75, minimum size=6mm];

\begin{scope}
\node[square] (1) {$z$};
\node[round] (2) [left of = 1]{$x$};
\node[round] (3) [right of = 1]{$y$};
\draw (2) edge[->,"1"] (1);
\draw (3) edge[->,"1",swap] (1);
\end{scope}
\end{tikzpicture}
\end{center}
or equivalently, the extended exchange matrix 
\[
\left(
\begin{array}{cc}
0&0\\
0&0\\
1&1\\
\end{array}
\right).
\]
By definition, the cluster variety $\CA$ is defined by equations
\[
xx'=yy'=z+1,z\neq0
\]
in $\BC^5$. 

We first calculate the cohomology groups of $\CA$. Let $\overline{\CA}$ be the partial compactification 
\[
\overline{\CA}=\{(x,x',y,y',z)\mid xx'=yy'=z+1\}.
\]
Then $\overline{\CA}$ is an affine cone and hence is contractible. Let $U$ be a tubular neighborhood of the boundary 
\[
U=\{(x,x',y,y',z)\mid xx'=yy'=z+1, \,|z|\le 1/2\}.
\]
Then the natural projection $\pi:U\to \Delta=\{|z|\le 1/2\}$ mapping $(x,x',y,y',z)$ to $z$ is a topological trivial fiber bundle. Then $U$ is diffeomorphic to $\pi^{-1}(0)\cong\BC^*\times \BC^*$ and $U\cap\CA\cong (\BC^*)^3$. Now apply the Mayer-Vietoris sequence to the covering $\overline{\CA}=\CA\cup U$, we have
\begin{align*}
0\to \BQ&\to H^0(\CA)\oplus \BQ     \to \BQ\\
 \to 0  &\to H^1(\CA)\oplus \BQ^2  \to \BQ^3\\
 \to 0  &\to H^2(\CA)\oplus \BQ     \to \BQ^3 \\
 \to 0  &\to H^3(\CA)               \to \BQ\\
 \to 0  &\to H^4(\CA)               \to 0.\\
\end{align*} 
So we conclude that 
\begin{equation} \label{coh}
H^k(\CA)=
\begin{cases}
\BQ& k=0,1,3,\\
\BQ^2& k=2,\\
0&\textup{otherwise.}
\end{cases}
\end{equation}

To calculate the mixed Hodge structure of $\CA$, we need a better compacfication. Consider the projection 
\begin{eqnarray*}
\BC^5&\to&\BC^4\\
(x,x',y,y',z)&\mapsto&(x,x',y,y'),
\end{eqnarray*}
It is straightforward to check that $\CA$ maps isomorphically to its image 
\[
X=\{(x,x',y,y')\mid xx'=yy'\neq 1\}
\] 
in $\BC^4$. Let
\begin{equation} \label{Xbar}
\overline{X}=\{[X_0:X_1:X_2:X_3:X_4]\mid X_0X_1=X_2X_3\}
\end{equation}
be the natural compactification of $X$ in $\BP^4$. Then the boundary divisor $D$ consists of two components, the divisor at infinite 
\begin{equation}  \label{D1}
D_1=\{[X_0:X_1:X_2:X_3:X_4]\mid X_0X_1=X_2X_3,\,X_4=0\}
\end{equation}
and the divisor corresponds to $xx'=yy'=1$
\begin{equation} \label{D2}
D_2=\{[X_0:X_1:X_2:X_3:X_4]\mid X_0X_1=X_2X_3=X_4^2\}.
\end{equation}

We first study the geometry of $D_1$ and $D_2$.
\begin{prop} \label{geom}
Let $\overline{X}$, $D_1$, and $D_2$ as above. Let $P_i$ be the point in $\BP^4$ whose $i$-th homogeneous coordinate is 1, and 0 otherwise.
\begin{enumerate}
\item $\overline{X}$ is the projective cone over $D_1$ with vertex $P_4$, and $D_1$ is isomorphic to $\BP^1\times\BP^1$.
\item $D_2$ has four singularities at $P_0$, $P_1$, $P_2$, $P_3$, all of which are of type $A_1$.
\item The intersection $D_1\cap D_2$ is the union of four lines: $\overline {P_0P_2}$, $\overline {P_2P_1}$, $\overline {P_1P_3}$ and $\overline {P_3P_0}$, and is transverse away from $P_0$, $P_1$, $P_2$, and $P_3$.
\end{enumerate}
\end{prop}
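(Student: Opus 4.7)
My plan is to verify each of the three assertions by explicit coordinate calculations in affine charts of $\BP^4$, exploiting the simple form of the quadric equations.

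For (1), the key observation is that the defining equation $X_0X_1 = X_2X_3$ of $\overline{X}$ is independent of $X_4$. Consequently $\overline{X}$ is the projective cone in $\BP^4$ with vertex $P_4$ over the quadric surface $Q = \{X_0X_1 = X_2X_3\}$ inside the hyperplane $\{X_4 = 0\} \cong \BP^3$, and under this identification $D_1 = \overline{X} \cap \{X_4 = 0\} = Q$. The Jacobian of $X_0X_1 - X_2X_3$ is $(X_1, X_0, -X_3, -X_2)$, which vanishes only at the origin of $\BC^4$, so $Q$ is a smooth quadric surface in $\BP^3$; the Segre embedding $([s:t],[u:v]) \mapsto [su:tv:sv:tu]$ then exhibits $D_1 \cong \BP^1 \times \BP^1$.

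For (2), I would analyze the singular locus of $D_2 = \{X_0X_1 = X_2X_3 = X_4^2\}$ in the affine charts $X_i = 1$ for $i = 0,1,2,3$; note that $P_4 \notin D_2$ since at $P_4$ one has $X_0X_1 = 0 \neq 1 = X_4^2$. In the chart $X_0 = 1$ near $P_0$, the equations $X_1 = X_2X_3$ and $X_1 = X_4^2$ eliminate $X_1$ and leave the hypersurface $X_2X_3 = X_4^2$, which has a standard $A_1$ singularity at the origin and is smooth elsewhere. By the symmetry of the equations under the swaps $X_0 \leftrightarrow X_1$ and $X_2 \leftrightarrow X_3$, the identical local picture holds at $P_1, P_2, P_3$. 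Smoothness of $D_2$ at any other point is then verified by checking that the Jacobian of the two defining polynomials $X_0X_1 - X_4^2$ and $X_2X_3 - X_4^2$ has full rank $2$.

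For (3), intersecting $D_1$ with $D_2$ imposes $X_4 = 0$ and then $X_0X_1 = 0$ and $X_2X_3 = 0$ separately. The four possible cases of which factor vanishes in each product cut out the four projective lines $\overline{P_1P_3}$, $\overline{P_1P_2}$, $\overline{P_0P_3}$, $\overline{P_0P_2}$, whose cyclic pairwise intersections recover the vertices $P_0, P_1, P_2, P_3$. For transversality at a point $q$ on one of these lines distinct from the $P_i$, observe that inside the smooth locus of $\overline{X}$ (which contains all of $D_1 \cup D_2$) the divisor $D_2$ is locally cut out by the single equation $X_0X_1 - X_4^2 = 0$, or by $X_2X_3 - X_4^2 = 0$. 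In a coordinate chart around such a $q$, the differentials of $X_4$ and of the local defining equation of $D_2$ are then linearly independent in $T_q \overline{X}$, establishing transversality.

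I expect the main obstacle to be the global smoothness check in part (2): while the elimination at each $P_i$ is completely clean, one must verify through a sufficient affine cover that $D_2$ is singular \emph{only} at the four listed points. Ruling out extra singularities along loci where several coordinates vanish simultaneously (but away from the four vertices) is the most delicate step and relies on a careful rank-$2$ analysis of the Jacobian of the pair of quadrics defining $D_2$. By contrast, the cone picture in part (1) and the incidence combinatorics in part (3) are essentially immediate once the defining equations are written out.
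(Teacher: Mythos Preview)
Your proposal is correct and follows essentially the same approach as the paper: the cone and Segre descriptions in (1), the Jacobian/local-chart analysis for the $A_1$ singularities in (2), and the explicit line decomposition plus tangent-space check for transversality in (3) all match the paper's argument. The only cosmetic differences are your choice of generators $X_0X_1-X_4^2,\ X_2X_3-X_4^2$ for the ideal of $D_2$ (the paper uses $X_0X_1-X_2X_3,\ X_0X_1-X_4^2$) and the order in which you handle the local model versus the global rank check in part (2).
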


\begin{proof}
\begin{enumerate}
\item  By comparing the defining equations (\ref{Xbar}) and (\ref{D1}), it is obvious that $\overline{X}$ is the projective cone over $D_1$. The equation (\ref{D1}) also implies that $D_1$ is the Segre embedding of $\BP^1\times \BP^1$ into $\{X_4=0\}\cong\BP^3$. 
\item Let $f=X_0X_1-X_2X_3$ and $g=X_0X_1-X_4^2$. Then $D_2=\{f=g=0\}$.  To find the singularities of $D_2$, it suffice to find the points on $X_0X_1=X_2X_3=X_4^2$ at which the Jacobian matrix 
\[
\frac{\partial(f,g)}{\partial(X_0,\cdots,X_4)}=\left(\begin{array}{ccccc}
X_1&X_0&X_3&X_2&0\\
X_1&X_0&0&0&2X_4
\end{array}\right)
\]
is of rank 1, or equivalently, the two rows are proportional. Simple algebra shows that there are 4 singularities $P_0,P_1,P_2,P_3$. Since the defining equation $D_2$ is symmetric, it suffices to show that $P_1$ is an $A_1$ singularity. By using the canonical affine coordinates around $P_1$, we have $D_2$ is locally defined by $x_1=x_2x_3=x_4^2$ in $\BC^4$. Note that the projection to the hyperplane $x_1=0$ induces an isomorphism to $x_2x_3=x_4^2$ in $\BC^3$. A change-of-variable argument shows that the surface singularity $(0,0,0)$ of $x_2x_3=x_4^2$ is of type $A_1$. 
\item The intersection of $D_1\cap D_2$ is defined by 
\[
X_0X_1=X_2X_3=X_4=0.
\]
So one of $X_0$ and $X_1$ has to be 0, and one of $X_2$ and $X_3$ is 0. Therefore, the intersection of $D_1$ and $D_2$ consists of four lines $\overline{P_0P_2}$, $\overline{P_2P_1}$, $\overline{P_1P_3}$ and $\overline{P_3P_0}$. They form a configuration of a square. Without lose of generality, it suffices to check transversality at a general point on the line $\overline{P_0P_2}$, \emph{i.e.} $Q=[X_0:0:X_2:0:0]$ where $X_0$ and $X_2$ are nonzero. Then on the affine chart $X_0\neq0$, $D_1$ is defined by $x_1=x_2x_3$ and $x_4=0$, so the tangent space at a point $(0,a,0,0)$ is $x_1=ax_3$ and $x_4=0$. $D_2$ is defined by $x_1=x_2x_3=x_4^2$. so the tangent space at $(0,a,0,0)$ is $x_1=ax_3$ and $x_1=0$. So the intersection of $D_1$ and $D_2$ is transverse at $Q$.
\end{enumerate} 
\end{proof}

For the purpose of calculating the mixed Hodge numbers, we have to properly modify $\overline{X}$ such that the total space is smooth and the boundary divisor is simple normal crossing. To achieve this, we blow up five points $P_0,\cdots,P_4$ in $\overline{X}$. We have

\begin{prop} \label{compact}
   Let $\widetilde{X}=Bl_{P_0,\cdots,P_4}\overline{X}$. Let $\widetilde{D}_1$ and $\widetilde{D}_2$ be the strict transforms of $D_1$ and $D_2$ respectively. Let $E_i$ be the exceptional divisor over $P_i$, $0\le i\le 4$. Then we have the following.
   \begin{enumerate}
       \item The blow up $Bl_{P_4}\overline{X}$ is a $\BP^1$-bundle over $D_1\cong\BP^1\times \BP^1$. So the cohomology groups of $\widetilde{X}$ is 
       \[
       H^k(\widetilde{X})=
       \begin{cases} 
       \BQ & k=0,6,\\
       \BQ^7 &k=2,4,\\
       0 & \textrm{otherwise.}
       \end{cases}
       \]
       \item For $i=1,2$, the strict transforms $\widetilde{D_i}=Bl_{P_0,\cdots,P_3}D_i$ are smooth and the cohomology groups are
       \[
       H^k(\widetilde{D}_i)=
       \begin{cases} 
       \BQ & k=0,4,\\
       \BQ^6 &k=2,\\
       0 & \textrm{otherwise}.
       \end{cases}
       \]
       \item $E_i\cong \BP^2$ for $0\le i\le 3$ and $E_4\cong \BP^1\times \BP^1$.
       \item $\widetilde{D}_1\cap\widetilde{D}_2$ is isomorphic to the disjoint union of $4$ copies of $\BP^1$.
       \item $\widetilde{D}_i\cap E_j$ is isomorphic to $\BP^1$ for $i=0,1$ and $0\le j\le 3$.
       \item Triple intersection $\widetilde{D}_1\cap \widetilde{D}_2\cap E_j$ consists of two points for $0\le j\le 3$ and there are no further intersections.
   \end{enumerate}
   In particular, $\widetilde{X}$ and $E_4$ are smooth projective varieties, the boundary divisor  $\widetilde{D}:=\widetilde{D}_1+\widetilde{D}_2+\sum_{i=0}^3E_i$ is simple normal crossing, and hence
   \begin{equation} \label{resol}
   \begin{tikzcd}
      (E_4,\varnothing)\arrow[r]\arrow[d] & (\widetilde{X},\widetilde{D})\arrow[d]\\
      (\{P_4\},\varnothing)\arrow[r] & (\overline{X}, D)
   \end{tikzcd}
   \end{equation}
   is a 2-cubical log resolution.
\end{prop}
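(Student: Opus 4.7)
The plan is to verify the six items in order and then assemble them into the SNC/log-resolution conclusion, handling the ambient threefold first, then each boundary divisor, then the intersection pattern at each of the five centers.

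For (1), since the defining equation $X_0X_1 = X_2X_3$ of $\overline{X}$ is independent of $X_4$, the variety $\overline{X}$ is the projective cone over the Segre quadric $D_1 \cong \BP^1 \times \BP^1 \subset \BP^3 = \{X_4 = 0\}$ with vertex $P_4$. Blowing up the vertex of such a cone gives the $\BP^1$-bundle $\BP_{D_1}(\CO \oplus \CO_{D_1}(1,1))$; in particular it is smooth, and the projective bundle formula gives $h^k(\mathrm{Bl}_{P_4}\overline{X}) = h^k(D_1) + h^{k-2}(D_1)$, yielding $h^0 = 1, h^2 = 3, h^4 = 3, h^6 = 1$. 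For part of (3), a Jacobian check shows $P_0, \dots, P_3$ are smooth on $\overline{X}$ and hence on $\mathrm{Bl}_{P_4}\overline{X}$, so each subsequent blow-up on the smooth threefold produces an exceptional $\BP^2$, while $E_4$ is the base of the cone, $\BP^1 \times \BP^1$. The cohomology of $\widetilde{X}$ follows by applying the smooth-point blow-up formula four more times: each adds a copy of $\BQ$ to $H^2$ and to $H^4$, producing the stated table.

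For (2), $D_1$ is disjoint from $P_4$ (since $D_1 \subset \{X_4 = 0\}$) and meets $P_0, \dots, P_3$ as smooth points, so $\widetilde{D}_1 \cong \mathrm{Bl}_{4\,\mathrm{pts}}(\BP^1 \times \BP^1)$ with $h^2 = 2 + 4 = 6$. Similarly $D_2$ avoids $P_4$ (at $P_4$ one has $X_0X_1 = 0 \neq X_4^2$) and carries $A_1$ singularities at $P_0, \dots, P_3$ by Proposition \ref{geom}; the blow-up of the ambient smooth threefold at each $P_j$ restricts to the minimal resolution of the corresponding $A_1$ singularity, so $\widetilde{D}_2$ is smooth. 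It is rational, being birational to the complete intersection of two quadrics in $\BP^4$, giving $h^1 = h^3 = 0$. For the Euler characteristic I would invoke the Milnor-number principle: a smooth degree-$4$ del Pezzo has $\chi = 8$, the four $A_1$ singularities decrease $\chi$ by $\sum \mu = 4$, and each $A_1$ resolution replaces a point with a $\BP^1$, adding $4$ back. Hence $\chi(\widetilde{D}_2) = 8$ and $h^2 = 6$.

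For (4)--(6), I would work locally at each center. At $P_4$ both $D_1$ and $D_2$ are absent, so $E_4$ contributes nothing to (5) or (6). At each $P_j$ with $0 \le j \le 3$, the exceptional divisor is $E_j \cong \BP^2$; the smooth surface $D_1$ contributes its projectivized tangent plane $\BP^1 \subset \BP^2$ to $\widetilde{D}_1 \cap E_j$, and the $A_1$ surface $D_2$ contributes its projectivized tangent cone, a smooth conic $\cong \BP^1$, to $\widetilde{D}_2 \cap E_j$. The two lines of $D_1 \cap D_2$ through $P_j$ have distinct tangent directions by the transversality established in Proposition \ref{geom}, yielding two distinct points in $\widetilde{D}_1 \cap \widetilde{D}_2 \cap E_j$. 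For (4), the four lines of $D_1 \cap D_2$ form a cycle meeting at $P_0, \dots, P_3$, and blowing up the four corners separates them into four disjoint $\BP^1$'s.

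The main obstacle is pinning down $h^2(\widetilde{D}_2) = 6$ cleanly; I expect the Milnor-number Euler-characteristic argument above to be the most efficient, though as an alternative one could exhibit $\widetilde{D}_2$ explicitly as an iterated blow-up of a Hirzebruch surface. Once smoothness, the intersection geometry, and the cohomology calculations are in place, the SNC property of $\widetilde{D}$ and the 2-cubical log resolution structure of diagram (\ref{resol}) follow directly from the definitions.
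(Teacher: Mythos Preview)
Your proposal is correct and follows essentially the same approach as the paper's proof: the cone/$\BP^1$-bundle description for (1), the Milnor-number Euler-characteristic computation for $\widetilde{D}_2$ in (2), and the tangent-cone analysis at each $P_j$ for (4)--(6) all match. The only cosmetic differences are that the paper invokes Deligne's decomposition theorem where you use the projective bundle formula, and the paper cites Dimca's results on complete intersections for the vanishing of odd cohomology of $D_2$ where you use rationality of the degree-4 del Pezzo; both pairs of arguments are equivalent in this context.
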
 

\begin{proof}
\begin{enumerate}
\item By Proposition \ref{geom}.(1), blowing up the vertex $P_4$ of the projective cone $X$ over $D_1$ yields a $\BP^1$-bundle over $D_1$. Since $D_1\cong \BP^1\times \BP^1$ is simply connected, all local systems on $D_1$ are trivial. So by Deligne's decomposition theorem for the proper smooth morphism $\pi:Bl_{P_4}\overline{X}\to D_1$, we have
\[
\begin{split}
R\pi_*\BQ_{Bl_{P_4}\overline{X}}=&R^0\pi_*\BQ_{Bl_{P_4}\overline{X}}\oplus R^1\pi_*\BQ_{Bl_{P_4}\overline{X}}[-1]\oplus R^2f_*\BQ_{Bl_{P_4}\overline{X}}[-2]\\
=& \BQ_{D_1}\oplus \BQ_{D_1}[-2].
\end{split}
\]
The cohomology groups
\[
       H^k(Bl_{P_4}\overline{X})=
       \begin{cases} 
       \BQ & k=0,6,\\
       \BQ^3 &k=2,4,\\
       0 & \textrm{otherwise.}
       \end{cases}
       \]
Further blowing up smooth points $P_0,\cdots,P_3$, we have
    \[
       H^k(\widetilde{X})=
       \begin{cases} 
       \BQ & k=0,6,\\
       \BQ^7 &k=2,4,\\
       0 & \textrm{otherwise.}
       \end{cases}
       \]
  \item Since $\widetilde{D}_1$ is the blow up of $D_1\cong \BP^1\times \BP^1$ at 4 smooth points, 
       \[
       H^k(\widetilde{D}_1)=
       \begin{cases} 
       \BQ & k=0,4,\\
       \BQ^6 &k=2,\\
       0 & \textrm{otherwise}.
       \end{cases}
       \]
       By definition $D_2$ is a $(2,2)$-complete intersection in $\BP^4$ with four $A_1$ singularities. Since $A_1$ surface singularity has Milnor number 1, we have 
       \begin{equation} \label{euler}
       \chi(D_2)=\chi(D_0)-\sum_{i=0}^3 \mu(D_2,P_i)=8-4=4,
       \end{equation}
       where $D_0$ is any smooth $(2,2)$-complete intersection in $\BP^4$ and $\mu(D_2,P_i)$ are Milnor numbers; see \cite[(5.3.7.iv), (5.4.4.ii)]{D}. By \cite[(5.4.3.i)]{D}, we have $H^i(D_2)=H^i(\BP^2)$ for $i=0,1,4$. Since the Milnor lattice of $A_1$ singularity is $\BZ$, \cite[(5.4.6.A)]{D} implies that $H^3(D_2)=0$. So the only remaining piece $H^2(D_2)=\BQ^2$ follows from (\ref{euler}). Since blowing up an $A_1$ singularity increase the 2nd Betti number by 1, so
       \[
       H^k(\widetilde{D}_2)=
       \begin{cases} 
       \BQ & k=0,4,\\
       \BQ^6 &k=2,\\
       0 & \textrm{otherwise}.
       \end{cases}
       \]
   \item $E_0,\cdots,E_3$ are the exceptional divisors of the blow up smooth points in a threefold, so $E_0,\cdots,E_3\cong\BP^2$. By Proposition \ref{geom}.(1), $E_4$ is the exceptional divisor of blowing up the vertex of the cone over $\BP^1\times\BP^1$. So $E_4\cong \BP^1\times\BP^1$.
   \item By Proposition \ref{geom}.(3), the intersection $D_1\cap D_2$ before blow up is the union of four lines $\overline {P_0P_2}, \overline {P_2P_1},\overline {P_1P_3},\overline {P_3P_0}$. 
   Since at the intersections they have different slope, the blow-up at $P_0,\cdots, P_3$ just separate the branches and get 4 copies of $\BP^1$. Note that $\widetilde{D}_1\cap \widetilde{D}_2$ does not contain any curve in the exceptional divisors, so $\widetilde{D}_1\cap \widetilde{D}_2$ is just a disjoint union of 4 copies of $\BP^1$.
   \item By the construction of the blowup, a point $Q$ in the exception divisor $E_j$ represents a tangent direction of $\overline{X}$, and $Q\in \widetilde{D}_j$ if the direction it represents lies in $D_i$. Since $D_1$ is smooth at $P_j$ and $D_2$ has an $A_1$ singularity at $P_j$, all possible directions form a $\BP^1$ in both cases. We have $\widetilde{D}_i\cap E_j\cong \BP^1$. 
   \item The triple intersection $\widetilde{D}_1\cap \widetilde{D}_2\cap E_j$ consists two points which represent the directions of the two lines of $D_1\cap D_2$ passing through $P_j$. 
\end{enumerate}
\end{proof}

\begin{thm} \label{h}
   Consider the cluster variety associated with the directed diagram
   \begin{center}
\begin{tikzpicture}
\tikzstyle{round}=[circle,thick,draw=black!75, minimum size=4mm];
\tikzstyle{square}=[rectangle, thick,draw=black!75, minimum size=6mm];

\begin{scope}
\node[square] (1) {$z$};
\node[round] (2) [left of = 1]{$x$};
\node[round] (3) [right of = 1]{$y$};
\draw (2) edge[->,"$1$"] (1);
\draw (3) edge[->,"$1$",swap] (1);
\end{scope}
\end{tikzpicture}
\end{center}
Then its mixed Hodge numbers $h^{k,(p,p)}$ are
\begin{center}
\begin{tabular}{c|cccc}
$k-p$ & $H^0$& $H^1$& $H^2$&$H^3$ \\
\hline
$0$ & $1$ &$1$&$2$&$1$\\
\end{tabular}
\end{center}
In particular, the mixed Hodge structure is of mixed Tate type but not satisfy the numerical curious hard Lefschetz property.
\end{thm}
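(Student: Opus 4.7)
The plan is to compute the mixed Hodge structure on $H^\ast(\CA)$ by combining a cubical Mayer--Vietoris long exact sequence attached to the isolated singularity $P_4 \in \CA$ with Deligne's weight spectral sequence for the log resolution $(\widetilde{X}, \widetilde{D})$ constructed in Proposition \ref{compact}. Since the ordinary Betti numbers of $\CA$ have already been computed in (\ref{coh}), the task reduces to pinning down the weight filtration on each $H^k(\CA)$ and checking that every nonzero class is of Hodge type $(k,k)$.

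The key observation is that $\widetilde{U} := \widetilde{X} \setminus \widetilde{D}$ is a resolution of singularities of $\CA \cong X = \overline{X} \setminus D$ that contracts the exceptional divisor $E_4 \cong \BP^1 \times \BP^1$ to the unique singular point $P_4$. The 2-cubical log resolution (\ref{resol}) then yields a long exact sequence of mixed Hodge structures
\begin{equation*}
\cdots \to H^{k-1}(E_4) \to H^k(\CA) \to H^k(\widetilde{U}) \oplus H^k(P_4) \to H^k(E_4) \to H^{k+1}(\CA) \to \cdots,
\end{equation*}
so once the mixed Hodge structure on $H^\ast(\widetilde{U})$ is under control, and using that $E_4$ and $P_4$ both have pure Tate cohomology, the numbers $h^{k,(p,p)}(\CA)$ are determined.

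For $H^\ast(\widetilde{U})$ I would apply Deligne's weight spectral sequence
\begin{equation*}
E_1^{-p,\, q+p} = H^{q-p}\bigl(\widetilde{D}^{[p]}\bigr)(-p) \Longrightarrow H^q(\widetilde{U}),
\end{equation*}
whose input is the cohomology of the disjoint union $\widetilde{D}^{[p]}$ of $p$-fold intersections of distinct components of $\widetilde{D}$. Proposition \ref{compact} identifies each stratum explicitly as a disjoint union of smooth projective varieties with pure Tate cohomology: $\widetilde{X}$ itself for $p = 0$; the six components $\widetilde{D}_1, \widetilde{D}_2, E_0, \ldots, E_3$ for $p = 1$; twelve copies of $\BP^1$ coming from $\widetilde{D}_1 \cap \widetilde{D}_2$ together with the $\widetilde{D}_i \cap E_j$ for $p = 2$; and eight triple-intersection points $\widetilde{D}_1 \cap \widetilde{D}_2 \cap E_j$ for $p = 3$. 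In particular $H^\ast(\widetilde{U})$ is automatically mixed Tate, and the differentials $d_1$ are the Gysin/restriction maps prescribed by the incidence combinatorics of $\widetilde{D}$.

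The main obstacle lies in the bookkeeping on the $E_1$ page: one must carefully compute the ranks of the Gysin/restriction maps in order to read off each $\textup{Gr}^W_{2p} H^k(\widetilde{U})$. Feeding the resulting Tate decomposition, together with the pure Tate cohomology of $E_4$, into the cubical Mayer--Vietoris sequence above and matching with the known Betti numbers from (\ref{coh}) then forces every nonzero class in $H^k(\CA)$ to live in $\textup{Gr}^W_{2k}$ of type $(k,k)$. The failure of the numerical curious hard Lefschetz property is then immediate: with $d = 3$, that property at $(k,i) = (0,3)$ would require $h^{0,(0,0)}(\CA) = h^{6,(6,6)}(\CA)$, which is violated since $H^6(\CA) = 0$ while $h^{0,(0,0)}(\CA) = 1$.
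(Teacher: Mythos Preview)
Your plan is correct and will produce the answer, but it organizes the computation differently from the paper. The paper runs a single weight spectral sequence attached to the full 2-cubical hyperresolution (\ref{resol}), following \cite[7.1.14]{EL}, which converges directly to $H^*(X)$; it then pins down the $E_2$ page by combining the geometry of Proposition \ref{compact} with the Betti numbers already computed in (\ref{coh}) and the known $E_2$-degeneration. You instead factor the same computation into two standard pieces: first Deligne's weight spectral sequence for the \emph{smooth} open variety $\widetilde{U}=\widetilde{X}\setminus\widetilde{D}=Bl_{P_4}X$, and then the discriminant-square Mayer--Vietoris sequence to descend from $\widetilde{U}$ to $X$. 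The geometric input (the strata of $\widetilde{D}$ and the cohomology of $E_4$) and the actual rank computations are the same in both approaches; your route is more modular and uses only the most classical tools, while the paper's single spectral sequence is more compact and lets one exploit the target Betti numbers (\ref{coh}) immediately rather than first having to determine $H^*(\widetilde{U})$. It is worth noting that the paper in fact carries out your second step, in the reverse direction, in the proof of Theorem \ref{ih}: there the discriminant square is used to read off $h^{k,(p,p)}(\widetilde{U})$ from the already-established $h^{k,(p,p)}(X)$, which confirms that your decomposition is consistent with the paper's computation. One point you should make explicit when you carry this out: the reason the Mayer--Vietoris step ``forces'' the top weight on $H^k(X)$ is that $E_4$ is smooth projective, so $H^k(E_4)$ is pure of weight $k$ and the restriction $H^k(\widetilde{U})\to H^k(E_4)$ automatically kills $\textup{Gr}^W_{2k}H^k(\widetilde{U})$; strictness then gives $\textup{Gr}^W_{2k}H^k(X)\cong\textup{Gr}^W_{2k}H^k(\widetilde{U})$, and matching dimensions against (\ref{coh}) finishes the argument.
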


\begin{proof}
  Following \cite[7.1.14]{EL}, the 2-cubical resolution (\ref{resol}) yields a 2nd quadrant $E_1$ spectral sequence which converges to the mixed Hodge structure of $X$
  \[
  E_1^{p,q}= H^{q+2p}\left(\widetilde{D}_{\widetilde{X}}^{(-p)}\sqcup\varnothing_{P_4}^{(-p)}\right)\oplus H^{q+2p-2}\left(\varnothing_{E_4}^{(-p+1)}\right)  \Longrightarrow H^{p+q}(X),
  \]
  where the notation $Y_Z^{(k)}$ for a possibly empty simple normal crossing divisor $Y$ in a smooth variety $Z$ means the disjoint union of all $k$-fold intersections of the components of $Y$ for $k>0$ and $Y^{(0)}_Z=Z$. In our situation, $\varnothing_{P_4}^{(-p)}$ is the point $P_4$ if $p=0$ and empty otherwise. Similarly $\varnothing_{E_4}^{(-p+1)}$ is $E_4$ for $p=1$ and empty otherwise. The geometry of $\widetilde{D}_{\widetilde{X}}^{(-p)}$ is completely described in Proposition \ref{compact}. Combining all the results, the $E_1$ page of the spectral sequence is
  \[
   \begin{tabular}{c|c|c||c||c}
       8&12 &6 &1& \\
       \hline
        &0 & 0&0& \\
        \hline
        &12 & 16&7&1 \\
        \hline
        & & 0&0&0\\
        \hline
        & & 6&7&2\\
        \hline
        & & &0&0 \\
        \hhline{=|=|=||=||=}
        & & &2&1 \\
        \hhline{=|=|=||=||=}
   \end{tabular}
   \]
where the column and the row between the double lines corresponds to $p=0$ and $q=0$, and the numbers are the dimensions of the corresponding cohomology groups. Since this spectral sequence degenerates at $E_2$ page \cite[Proposition 7.13]{EL}, it suffices to study the differential $d_1$, which are by definition alternating sums of Gysin maps for $p\le -1$ and restriction maps for $p\ge0$. We simply omit the subscript of $d_1$ and just call it $d$.
  
Row $q=2$. By the proof of Proposition \ref{compact}.(1), $\widetilde{X}$ is the blow up of $\BP^1$-bundle of $E_4=l_1\times l_2$, the product of two lines, at four points. So $H^2(\widetilde{X})$ is spanned by basis $E_0,\dots,E_4, \BP^1_{l_1},\BP^1_{l_2}$, where $\BP^1_{l_i}$ denotes the $\BP^1$-bundle over the line $l_i$. The differential $d^{0,2}:E_1^{0,2}\to E_1^{1,2}$ is exactly the restriction $H^2(\widetilde{X})\to H^2(E_4)$, so $d^{0,2}$ is surjective and $\ker d^{0,2}=\langle E_0,\dots,E_4\rangle$ is 5-dimensional. On the other hand, $E_1^{-1,2}$ are freely spanned by the fundamental classes of $\widetilde{D}_1, \widetilde{D}_2, E_0,\cdots, E_3$ in $\widetilde{X}$, so $\textup{im}\,d^{-1,2}$ are the subspace in $H^2(X)$ spanned by these classes. Since $\widetilde{D_1}$ is the strict transform of $D_1\in \Pic(\overline{X})$ and $E_0,\dots,E_3$ are exceptional classes of the blowup $\widetilde{X}\to\overline{X}$, they are linearly independent, so $\dim \textup{im}\,d^{-1,2}\ge 5$. Therefore, we have $\ker d^{0,2}=\textup{im}\,d^{-1,2}$ and hence $E_2^{-1,2}\cong\BQ$ and $E_2^{0,2}=E_2^{1,2}=0$.

Row $q=4$.  Since $H^{\ge 4}(X)=0$ and the spectral sequence degenerates at $E_2$ page, the differentials are exact at $E_1^{0,4}$ and $E_1^{1,4}$. So $\dim \textup{im}\, d^{0,4}=1$ implies $\dim \textup{im}\, d^{-1,4}=\dim \ker d^{0,4}=6$, and hence $\dim \ker d^{-1,4}=16-6=10$. On the other hand, $\BQ^2\cong H^2(X)=E_2^{-2,4}\oplus E_2^{0,2}=\ker d^{-2,4}$. So $\textup{im}\,d^{-2,4}\cong \BQ^{10}$ and the spectral sequence is exact at $E^{-1,4}$. Therefore, $E_2^{-2,4}=\BQ^2$ and $E_2^{-1,4}=E_2^{0,4}=E_2^{1,4}=0$

Row $q=6$.  Since $H^{\ge 4}(X)=0$, the spectral sequence is exact at $E_1^{-2,6}$, $E_1^{-1,6}$ and $E_1^{0,6}$. Since the spectral sequence degenerates at $E_2$, we have 
\[
H^3(X)\cong E_2^{-3,6}\oplus E_2^{-1,4}\oplus E_2^{1,2}
\]
Now by (\ref{coh}) and $E_2^{1,2}=E_2^{-1,4}=0$, we have $E_2^{-3,6}\cong\BQ$.

Combining all results above, we have the $E_2$ page 
  \[
   \begin{tabular}{c|c|c||c||c}
       1&0 &0 &0& \\
       \hline
        &0 & 0&0& \\
        \hline
        &2 & 0&0 & 0\\
        \hline
        & & 0&0&0\\
        \hline
        & & 1&0&0\\
        \hline
        & & &0&0 \\
        \hhline{=|=|=||=||=}
        & & &1&0 \\
        \hhline{=|=|=||=||=}
   \end{tabular}
   \]
and hence the nonzero Deligne splittings are $\textup{Gr}_0H^0(X)=\BQ$, $\textup{Gr}_1H^1(X)=\BQ$, $\textup{Gr}_4H^2(X)=\BQ^2$, $\textup{Gr}_6H^3(X)=\BQ$.  Since the cohomology groups of all varieties which appear in the spectral sequence are of Tate type, the mixed Hodge structure of $X$ is of mixed Tate type, and $h^{k,(p,p)}$ satisfies
\begin{center}
\begin{tabular}{c|cccc}
$k-p$ & $H^0$& $H^1$& $H^2$&$H^3$ \\
\hline
$0$ & $1$ &$1$&$2$&$1$\\
\end{tabular}
\end{center}
\end{proof}

We can also calculate the mixed Hodge structure of the intersection cohomology of the singular cluster variety $X$.

\begin{thm} \label{ih}
   The mixed Hodge structure of the intersection cohomology $IH^k(X)$ is of mixed Hodge type and the mixed Hodge numbers are
\begin{center}
\begin{tabular}{c|cccc}
$k-p$ & $IH^0$& $IH^1$& $IH^2$&$IH^3$ \\
\hline
$0$ & $1$ &$1$&$2$&$1$\\
$1$ &     &   &$1$&   \\
\end{tabular}
\end{center}
\end{thm}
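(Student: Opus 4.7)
The plan is to realize $IH^*(\CA)$ as the ordinary cohomology of a smooth small resolution, then compute directly. Identifying $\CA$ with $X=\{xx'=yy'\neq 1\}\subset\BC^4$, the Jacobian of $xx'-yy'$ vanishes only at the origin, so $X$ has a single isolated conifold singularity. I would use the classical small resolution of the conifold: define
\[
W'=\{((x,x',y,y'),[s:t])\in\BC^4\times\BP^1\mid xt=ys,\,y't=x's\},
\]
whose two visible $\BC^3$-charts identify $W'$ with the total space of $\CO_{\BP^1}(-1)\oplus\CO_{\BP^1}(-1)$, and whose projection to $\BC^4$ contracts only the zero section over the origin. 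Restricting to $\pi:X'=W'\setminus\pi^{-1}(\{xx'=1\})\to X$ yields a proper small projective morphism with $X'$ smooth, so by Saito's decomposition theorem for small maps, $IH^*(\CA)\cong H^*(X')$ as mixed Hodge structures.

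To compute $H^*(X')$ I would apply the Gysin sequence to the smooth divisor $Z'=W'\setminus X'=\pi^{-1}(\{xx'=1\})$. Since $\{xx'=1\}$ avoids the conifold point, $\pi$ restricts to an isomorphism $Z'\cong\{xx'=yy'=1\}\cong(\BC^*)^2$ via $(x,y)\mapsto(x,1/x,y,1/y)$, so $H^k(Z')=\bigwedge^k\BQ(-1)^{\oplus 2}$. Meanwhile $W'$ deformation retracts onto the zero section $\BP^1$, giving $H^0(W')=\BQ(0)$, $H^2(W')=\BQ(-1)$, and zero otherwise. The sequence
\[
\cdots\to H^{k-2}(Z')(-1)\xrightarrow{i_*}H^k(W')\to H^k(X')\to H^{k-1}(Z')(-1)\to\cdots
\]
then determines $H^*(X')$ once the Gysin maps are known.

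The crux, and what I expect to be the main obstacle, is the single potentially nontrivial Gysin map $i_*:H^0(Z')(-1)\to H^2(W')$, equivalently the class $[Z']\in H^2(W')\cong\BQ(-1)$. A direct check in the two $\BC^3$-charts of $W'$ shows that $Z'$ does not meet the zero section $\BP^1\subset W'$; since the pullback $\Pic(W')\to\Pic(\BP^1)$ is an isomorphism $\BZ\xrightarrow{\sim}\BZ$, vanishing of $[Z']|_{\BP^1}$ forces $[Z']=0$ in $H^2(W')$. With the Gysin map vanishing the long exact sequence splits (consecutive weights always differ) into
\[
H^0(X')=\BQ(0),\quad H^1(X')=\BQ(-1),\quad H^2(X')=\BQ(-1)\oplus\BQ(-2)^{\oplus 2},\quad H^3(X')=\BQ(-3),
\]
and zero above, matching exactly the claimed table of mixed Hodge numbers for $IH^*(\CA)$.
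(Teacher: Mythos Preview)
Your proof is correct and takes a genuinely different route from the paper. The paper resolves $X$ by the ordinary blow-up $Bl_{P_4}X$, which is \emph{not} small: the exceptional divisor is $E_4\cong\BP^1\times\BP^1$. Consequently the decomposition theorem for $p:Bl_{P_4}X\to X$ has extra summands supported at $P_4$, which the paper identifies via a stalk computation as $\BQ_{P_4}(-1)[1]\oplus\BQ_{P_4}(-2)[-1]$; it then reads off $IH^*(X)$ by subtracting these from $H^*(Bl_{P_4}X)$, the latter being computed from the already-established Theorem~\ref{h} via the Mayer--Vietoris sequence of the discriminant square. Your approach exploits the special geometry of the ordinary double point: the small resolution $X'\to X$ gives $R\pi_*\BQ_{X'}[3]\cong IC_X$ on the nose, so $IH^*(X)=H^*(X')$ with no correction terms, and the Gysin sequence for the smooth pair $(W',Z')$ with $W'\simeq\BP^1$ and $Z'\cong(\BC^*)^2$ is short and transparent once you observe $[Z']=0$ (which follows, as you note, from $Z'\cap\BP^1=\varnothing$ together with $H^2(W')\xrightarrow{\sim}H^2(\BP^1)$). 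Your argument is self-contained and does not rely on the spectral-sequence computation of $H^*(X)$ in Theorem~\ref{h}; the paper's approach, on the other hand, would adapt to singularities admitting no small resolution. One cosmetic remark: the short exact sequences of mixed Hodge structures you obtain need not literally split, but strictness of the weight filtration already pins down the mixed Hodge numbers, so your conclusion stands.
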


\begin{proof}
Let $X'=Bl_{P_4}X$ be the blow up of the singularity. By Proposition \ref{compact}.(1), $X'$ is smooth and 
\begin{equation} \label{disc}
\begin{tikzcd}
E_4\arrow[r]\arrow[d] & X'\arrow[d]\\
\{P_4\}\arrow[r]& X
\end{tikzcd}
\end{equation}
is a proper modification. Then by \cite[Corollary-Definition 5.37]{PS}, the Mayer-Vietoris sequence for the discriminant square (\ref{disc}) yields an exact sequence of mixed Hodge structures
\begin{align*}
0\to& \BQ\to H^0(X')\oplus \BQ \to \BQ\\
\to& \BQ(-1)\to H^1(X') \to 0\\
\to& \BQ^2(-2)\to H^2(X') \to \BQ^2(-1)\\
\to& \BQ(-3)\to H^3(X') \to 0\\
\to&0\to H^4(X') \to \BQ(-2)\to0,\\
\end{align*}
where $\BQ(k)$ is the Tate twist which is of Hodge type $(-k,-k)$. So the Hodge numbers  $h^{k,(p,p)}(X')$ are
\begin{center}
\begin{equation}\label{table}
\begin{tabular}{c|ccccc} 
$k-p$ & $H^0$& $H^1$& $H^2$&$H^3$ &$H^4$ \\
\hline
$0$ & $1$ &$1$&$2$&$1$& \\
$1$ &     &   &$2$&  & \\
$2$ &     &   & &  & $1$
\end{tabular}
\end{equation}
\end{center}

Apply the decomposition theorem to the natural morphism $p:X'=Bl_{P_4}X\to X$, we have
\begin{equation} \label{aaa0}
Rp_*\BQ_{X'}[3]\cong IC_X\oplus F
\end{equation}
where $F$ supports at the singular point $P_4$. To determine $F$, it suffices to compare the stalks at $P_4$. By proper base change theorem, 
\[
(R^kp_*\BQ_{X'}[3])_{P_4}=H^{k+3}\left(p^{-1}(P_4),\BQ\right)\cong H^{k+3}(\BP^1\times\BP^1,\BQ),
\]
so 
\begin{equation} \label{aaa1}
(Rp_*\BQ_{X'}[3])_{P_4}=\BQ_{P_4}[3]\oplus\BQ^2_{P_4}(-1)[1]\oplus\BQ_{P_4}(-2)[-1].
\end{equation}
By definition, $IC_X=\tau_{\le-1}(Rj_*\BQ_U[3])$, where $j:U=X\setminus\{P_4\}\hookrightarrow X$ is the open embedding. Since $(R^kj_*\BQ_U[3])_{P_4}=\varinjlim_{U\ni P_4} H^{k+3}(U\setminus \{P_4\},\BQ)$, we have 
\[
(Rj_*\BQ_U[3])_{P_4}=\BQ_{P_4}[3]\oplus\BQ_{P_4}(-1)[1]\oplus \BQ_{P_4}(-2)\oplus\BQ_{P_4}(-3)[-2]
\]
and hence
\begin{equation} \label{aaa2}
(IC_X)_{P_4}=\BQ_{P_4}[3]\oplus\BQ_{P_4}(-1)[1].
\end{equation}
Comparing (\ref{aaa0}), (\ref{aaa1}) and (\ref{aaa2}), we have $F=\BQ_{P_4}[1]\oplus \BQ_{P_4}[-1]$, and
\begin{equation} \label{bbb}
Rp_*\BQ_{X'}[3]\cong IC_X\oplus \BQ_{P_4}(-1)[1]\oplus \BQ_{P_4}(-2)[-1].
\end{equation}
Taking the hypercohomology of (\ref{bbb}), we have the isomorphism of Hodge structures
\begin{equation}\label{ccc}
H^*(X',\BQ)=IH^*(X)\oplus \BQ(-1)[-2]\oplus \BQ(-2)[-4].
\end{equation}
Compare with the table (\ref{table}), we conclude that the mixed Hodge number $h^{k,(p,p)}$ of the intersection cohomology $IH(X)$ is
\begin{center}
\begin{tabular}{c|cccc}
$k-p$ & $IH^0$& $IH^1$& $IH^2$&$IH^3$ \\
\hline
$0$ & $1$ &$1$&$2$&$1$\\
$1$ &     &   &$1$&   \\
\end{tabular}
\end{center}

\end{proof}


\begin{thebibliography}{99}

\bibitem{dCHM}
M. de Cataldo, T. Hausel, L. Migliorini, \emph{Topology of Hitchin systems and Hodge theory of character varieties: the case $A_1$,} Ann. of Math. (2) 175 (2012), no. 3, 1329-1407.

\bibitem{D}
A. Dimca, \emph{Singularities and topology of hypersurfaces,} Universitext, Springer-Verlag, New York, 1992, xvi+263 pages, ISBN 0-387-97709-0.

\bibitem{EL}

F. El Zein, D. T. L\^{e}, \emph{Mixed Hodge structures}, Math. Notes, 49, Princeton University Press, Princeton, NJ, 2014, 123-216. 



\bibitem{HMMS}
T. Hausel, A. Mellit, A. Minets, O. Schiffman, \emph{$P=W$ via $\CH_2$}, arXiv:2209.05429.




\bibitem{Muller2013}
G. Muller, \emph{Locally acyclic cluster algebras,} Adv. Math. 233 (2013), 207–247.

\bibitem{LS}
T. Lam, D. Speyer, \emph{Cohomology of cluster varieties I: locally acyclic case}, Algebra and Number Theory, Volume 16, 2022, No 1.

\bibitem{LS1}
T. Lam, D. Speyer, \emph{Cohomology of cluster varieties II: Acyclic case}, J. Lond. Math. Soc. (2) 108 (2023), no. 6, 2377-2414.

\bibitem{MS}
D. Maulik, J. Shen, \emph{The $P=W$  conjecture for  $\textup{GL}_n$}, Ann. of Math. (2) 200 (2024), no. 2, 529–556. 



\bibitem{MSY}
D. Maulik, J. Shen, Q. Yin, \emph{Perverse filtrations and Fourier transforms}, Acta Math. 234 (2025), no. 1, 1-69.

\bibitem{PS}
C. Peters, J. Steenbrink, \emph{Mixed Hodge structures,} Ergeb. Math. Grenzgeb. (3), 52 [Results in Mathematics and Related Areas. 3rd Series. A Series of Modern Surveys in Mathematics], Springer-Verlag, Berlin, 2008. xiv+470 pp. ISBN:978-3-540-77015-2.





\bibitem{Z}
Z. Zhang, \emph{The P=W identity for cluster varieties}, Math. Res. Lett. Volume 28, Number 3, 925–944, 2021.

\bibitem{Z1}
Z. Zhang, \emph{The  $P=W$  identity for isolated cluster varieties: full rank case}, Math. Z. 308 (2024), no. 1, Paper No. 3, 16 pp.



\end{thebibliography}
\end{document}